\theoremstyle{plain}
\newtheorem{thm}{Theorem}
\newtheorem{prop}{Proposition}[section]
\newtheorem{lem}[prop]{Lemma}
\newtheorem{rmk}[prop]{Remark}
\newtheorem{example}[prop]{Example}
\newcommand {\R} {\mathbb{R}} \newcommand {\Z} {\mathbb{Z}}
 \newcommand {\N} {\mathbb{N}}
\newcommand {\diam} {\text{diam}}
\DeclareMathOperator{\argmin}{argmin}
\DeclareMathOperator {\dist} {dist}
\DeclareMathOperator{\Curl} {Curl}
\DeclareMathOperator{\Per} {Per}
\newcommand{\norm}[1]{\left|\left|#1\right|\right|}
\newcommand{\abs}[1]{\left|#1\right|}
\newcommand{\rB}[1]{\ensuremath{\left(#1\right)}}
\newcommand{\cB}[1]{\ensuremath{\left\{#1\right\}}}
\DeclareMathOperator{\de}{d}
\DeclareMathOperator{\Dim}{dim}
\title[A Compactness and Structure Result]{A Compactness and Structure Result for a Discrete
Multi-Well Problem with $SO(n)$ Symmetry in Arbitrary Dimension}
\author[G. Kitavtsev]{Georgy Kitavtsev}
\address{
University of Bristol,
University Walk,
Clifton, Bristol BS8 1TW, United Kingdom}
\email{georgy.kitavtsev@bristol.ac.uk}
\author[G. Lauteri]{Gianluca Lauteri}
\address{
Max-Planck-Institute for Mathematics in the Sciences,
Inselstraße 22,
04103 Leipzig,
Germany}
\email{lauteri@mis.mpg.de}
\author[S. Luckhaus]{Stephan Luckhaus}
\address{Universität Leipzig,
Mathematisches Institut,
D-04009 Leipzig, Germany}
\email{luckhaus@math.uni-leipzig.de}
\author[A. R\"uland]{Angkana R\"uland}
\address{
Max-Planck-Institute for Mathematics in the Sciences,
Inselstraße 22,
04103 Leipzig,
Germany}
\email{rueland@mis.mpg.de}
\begin{document}

\begin{abstract}
In this note we combine the ``spin-argument" from \cite{KLR15} and the
$n$-dimensional incompatible, one-well rigidity result from \cite{LL16}, in order to
infer a new proof for the compactness of discrete multi-well energies associated
with the modelling of surface energies in certain phase transitions. Mathematically, a main novelty here
is the reduction of the problem to an incompatible one-well problem. The presented argument is very robust and applies to a number of different physically interesting models, including for instance phase transformations in shape-memory materials but also anti-ferromagnetic transformations or related transitions with an ``internal" microstructure on smaller scales.
\end{abstract}

\maketitle

\section{Introduction}
It is the purpose of this note to provide a short, essentially self-contained
compactness argument and structure result for a multi-well discrete-to-continuum problem with $SO(n)$ symmetry arising in the
variational modelling of certain phase transitions in arbitrary dimension by combining the
ingredients from \cite{KLR15} and \cite{LL16}, \cite{LL17}. Although similar results were already proved in the context of martensitic phase transitions occurring in the modelling of shape-memory materials in \cite{KLR15} and \cite{ALP17}, the present article contains two main novelties: On the one hand, the mathematical argument leading to the structure result is of interest, since in the present note we neither invoke the
rigidity results of \cite{DM1} nor of \cite{CGP07}. Instead, we reduce the problem to the setting of an (incompatible) one-well problem.
On the other hand, our arguments extend to a large class of Hamiltonians with possibly quite nonlocal interactions. These cover a number of relevant physical phase transformations, including for instance also the anti-ferromagnetic transitions. The generality of the systems which are covered here, goes far beyond the ones from \cite{KLR15} or \cite{ALP17}.\\

From a mathematical point of view, the result is based on the following idea: Instead of relying on \cite{DM1} or on \cite{CGP07}, we combine the ``spin
argument" from \cite{KLR15} with the $n$-dimensional strong-supercritical and weak-critical, incompatible, one-well rigidity results from \cite{LL17} and their consequences from \cite{LL16}. To this end, we reduce the compatible
multi-well problem to an auxiliary, incompatible one-well setting, which allows us
to invoke \cite{LL16}. We believe that this
strategy -- and in particular the reduction to the auxiliary one-well problem -- is
interesting in its own right and can be applied to a quite wide range of physically relevant models.
\\

\subsection{Outline of the article}
In order to introduce the new ideas in an as simple as possible set-up, the first part of the article (Sections \ref{sec:setup_result}-\ref{sec:proof}) deals with a special Hamiltonian and only considers the physical setting of a martensitic transition in shape-memory alloys. For a nearest neighbour Hamiltonian, which is easy to formulate and which is presented in Section \ref{sec:setup_result}, we explain the interplay between the spin argument (c.f. Section \ref{sec:spin}) and the incompatible, one-well rigidity results from \cite{LL17} (c.f. Section \ref{sec:proof}).   
As the main result in this context, we derive the structure result formulated in Theorem \ref{prop:BV_energy}.

In Section \ref{sec:gen}, we then generalize these ideas to a much larger class of physical systems and a much larger family of underlying Hamiltonians. Here we for instance allow for arbitrary finite range interactions. Also in this setting we deduce a compactness and structure result (see Theorem \ref{thm:BV_struc_gen}).
This in particular shows the robustness of the underlying mathematical argument.

\section{The Set-Up and the Main Result for Martensitic Phase Transformations Modelled by Nearest Neighbour Interactions}
\label{sec:setup_result}

\subsection{Set-up}
\label{sec:setup}

In order to formulate our main result in the context of nearest neighbour interaction models for martensitic phase transformations, we first outline the precise set-up of our
$n$-dimensional, discrete multi-well problem. Here we impose several conditions on
the wells, the underlying triangulation and the associated energy, which we explain
in the sequel. In order to keep the set-up as simple as possible, we first discuss a model scenario and postpone the analysis of more general systems (including more nonlocal interactions and periodic internal microstructures which may be present in other physical systems) to Section \ref{sec:gen}.\\

We begin by introducing the relevant ingredients in formalizing the precise setting.

\subsubsection{Wells}
\label{sec:wells}

Let $U_1,\dots,U_k \in \R^{n\times n}$ be pairwise different, symmetric, positive
definite matrices.
Let $K= \bigcup\limits_{j=1}^{k}SO(n)U_j$ denote the union of the \emph{(energy)
wells} $SO(n)U_j$. Assume that the wells are pairwise rank-one connected, i.e.
assume that there exist two rotations $Q_{ij}^{\pm}$ and vectors $a_{ij}^{\pm} \in
\R^n \setminus \{0\}$, $b_{ij}^{\pm}\in S^{n-1}$ such that
\begin{align}
\label{eq:rank_one}
U_i - Q_{ij}^{\pm} U_j = a_{ij}^{\pm}\otimes b_{ij}^{\pm}.
\end{align}
Suppose moreover that the wells are separated
\begin{align}
\label{eq:dist}
\min\limits_{i\neq j}\dist(SO(n)U_i, SO(n)U_j) \geq d>0.
\end{align}

\subsubsection{Triangulation}
\label{sec:grid}

Let $\Omega \subset \R^n$ be a bounded open Lipschitz domain. Suppose that
$\mathcal{T}_m=\bigcup\limits_{\alpha} T_{\alpha}$ is a non-degenerate triangulation
of $\Omega$, i.e. assume that for each $\alpha \in I_m$ the tetrahedron $T_{\alpha}
\subset \Omega$ is non-degenerate. More precisely, suppose that there are
constants $\tilde{c}_{1,T}, \tilde{c}_{2,T}>0$, which are independent of $m$, such
that for each $\alpha \in I_{m}$
\begin{align*}
\tilde{c}_{1,T} \leq r_{i,\alpha} \leq \diam(T_{\alpha}) \leq \tilde{c}_{2,T},
\end{align*}
where $r_{i,\alpha}, \diam(T_{\alpha})$ denote the in-radius and the diameter of the
tetrahedron $T_{\alpha}\in \mathcal{T}_m$.
In particular this implies that there are constants $c_{1,T}, c_{2,T}>0$ such that
for all $T_{\alpha}\in \mathcal{T}_m$
\begin{align}
\label{eq:non_deg}
c_{1,T} m^{-n} \leq |T_{\alpha}| \leq c_{2,T}m^{-n},
\end{align}
where $|T_{\alpha}|$ denotes the Lebesgue measure of $T_{\alpha}$.
Further assume that the triangulation $\mathcal{T}_m$ is incompatible with the
rank-one connections in $K$, i.e. there exists $\delta_0>0$ (which is independent of
$m$) such that for any $T_{\alpha}\in \mathcal{T}_m$ and for any normal $b \in
S^{n-1}$ associated to an $(n-1)$-dimensional interface of $T_{\alpha} $ we have
that
\begin{align}
\label{eq:incomp}
|b\cdot b_{ij}^{\pm}|\leq 1- \delta_0.
\end{align}
Here $b_{ij}^{\pm} \in S^{n-1}$ denotes any normal vector from \eqref{eq:rank_one}. 
In particular, by compactness of the involved sets this implies that there exists a constant $\overline{d}>0$ (depending on $\delta>0$ and $d$) such that for all choices of $i_1,i_2\in\{1,\dots,k\}$ with $i_1\neq i_2$
\begin{align}
\label{eq:d}
\min\limits_{Q\in SO(n)} \min\limits_{\substack{b \in S^{n-1} \text{ satisfying } \eqref{eq:incomp} \\ \text{ for all } i,j\in\{1,\dots,k\}}}  \min\limits_{\substack{\tau_1,\dots, \tau_{n-1} \in S^{n-1}, \\ \tau_{l}\cdot b=0,  \\ \tau_{l_1}\cdot \tau_{l_2}=0, \\ l_1 \neq l_2}} \max\limits_{\tau_l \in \{\tau_1,\dots,\tau_{n-1}\}}|(U_{i_1} - Q U_{i_2}) \tau_l| \geq \overline{d}>0.
\end{align}
Indeed, by compactness this minimum is attained. It does not vanish, as this would else yield a rank-one connection between $SO(n)U_{i_1}$ and $SO(n)U_{i_2}$, which is different from the ones listed in \eqref{eq:rank_one} in Section \ref{sec:wells}.

\begin{rmk}
\label{rmk:lower}
We note that by the assumption \eqref{eq:non_deg} the constant $\overline{d}$ from
\eqref{eq:d} has a strictly positive lower bound which is \emph{independent} of $m$.
\end{rmk}

\subsubsection{Deformation and energy}
\label{sec:energy}

Let $\mathcal{A}_m$ denote the set of piecewise affine deformations $u\in
W^{1,\infty}(\Omega, \R^n)$ adapted to the grid $\mathcal{T}_m$, i.e. for each
$\alpha \in I_m$ we have $\nabla u|_{T_{\alpha}} = const$.

For $u \in \mathcal{A}_m$ we then consider the energy
\begin{align}
\label{eq:e}
E_m(u) = \int\limits_{\Omega} h(\nabla u) dx,
\end{align}
where the energy density $h: \R^n \rightarrow [0,\infty)$ satisfies  the
comparability condition
\begin{align}
\label{eq:compare}
c_1 \dist^2(M, K) \leq h(M)  \mbox{ for all } M \in \R^{n\times n}_+
\end{align}
for some constant $c_1 >0$.
We in addition restrict our attention to sequences of deformations $\{u_m\}_{m\in
\N}\subset \mathcal{A}_m$ which are of \emph{surface energy scaling}, i.e. we assume
that there exists a constant $C>1$ (which does not depend on $m\in \N$) such that
\begin{align}
\label{eq:energy}
E_{m}(u_m)\leq C m^{-1}.
\end{align}

\subsubsection{Remarks on the assumptions from Sections
\ref{sec:wells}-\ref{sec:energy}}
Let us comment on the assumptions from Sections \ref{sec:wells}-\ref{sec:energy}: 
\begin{itemize}
\item Section \ref{sec:wells} asserts that we are considering a non-degenerate
multi-well problem, which for instance arises in the modelling and analysis of the
stress-free deformations of shape-memory materials. We remark that in this context,
we could also have added wells with only one or no rank-one connection to the other
wells. As our main motivation however stems from microstructures allowing for the
\emph{presence} of interfaces, we do not pursue this in the sequel. 
\item The conditions in Section \ref{sec:grid} render our problem a discrete
problem, since in Section \ref{sec:energy} we only consider piecewise affine
deformations adapted to the triangulation. The set of admissible grids is quite
large; in particular we do not require the underlying triangulations to be periodic
and only ask for a mild non-degeneracy in \eqref{eq:non_deg}. Requiring the
condition \eqref{eq:incomp} is however crucial for our purposes since it implicitly
provides a ``surface energy" regularizing contribution in the energy \eqref{eq:e}
(this ``finite element regularization" was observed for perhaps the first time in
the work of Lorent \cite{L09}, c.f. also the references therein). 
\item The energy densities $h$ which we consider are also only subject to mild
restrictions: The requirement in \eqref{eq:compare} ensures that in tetrahedra with
low energy the deformation gradient is close to the energy wells from Section
\ref{sec:wells}. Finally, the condition \eqref{eq:energy} ensures that we only
consider deformations with the simplest possible microstructures, i.e. we only
consider deformations of which we expect that they are very close to being exact
solutions to the differential inclusion $\nabla u \in K$. The smallness condition in \eqref{eq:energy} is such that it for instance excludes
microstructures which display branching phenomena.
\end{itemize}

\subsection{The main result}
\label{sec:result}
We assume that the conditions from Section \ref{sec:setup} hold.
Motivated by discrete-to-continuum limits as in \cite{KLR14}, \cite{KLR15}, \cite{ALP17}, we then seek
to deduce the following convergence and structure result:

\begin{thm}
\label{prop:BV_energy}
Let $n\geq 2$ and assume that $U_1,\dots,U_k$, $K$, $E_m$ are as in Section
\ref{sec:setup}.
Let $\{ u_m\}_{m\in \N} \subset \mathcal{A}_m$ be a sequence satisfying
\eqref{eq:energy}. Then, there exist (up to null-sets) disjoint Caccioppoli
partitionings
\begin{align*}
\Omega = \bigcup\limits_{j=1}^{k}\Omega_j, \quad \Omega_j =
\bigcup\limits_{i=1}^{\infty}\Omega_{j,i},
\end{align*}
with underlying characteristic functions $\chi_j, \chi_{j,i}: \Omega \rightarrow
\{0,1\}$, which in particular satisfy
\begin{align*}
\sum\limits_{j=1}^{k} \chi_j=1, \quad \sum\limits_{i=1}^{\infty}\chi_{j,i}=\chi_j,
\quad
\sum\limits_{j=1}^{k} \sum\limits_{i=1}^{\infty} |D \chi_{j,i}|(\Omega)<\infty,
\end{align*} 
a deformation $u\in W^{1,\infty}(\Omega)$ with $\nabla u \in BV(\Omega,K)$ and
rotations $R_{j,i}$ such that
\begin{align*}
&\nabla u_m \rightarrow \nabla u \mbox{ in } L^2(\Omega), \quad
\nabla u = \sum\limits_{j=1}^{k} \sum\limits_{i=1}^{\infty}R_{j,i} U_{j} \chi_{j,i} .
\end{align*}
\end{thm}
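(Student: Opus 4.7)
\emph{Overall strategy.} The plan is to follow the two-step strategy announced in the introduction: first use a ``spin argument'' in the spirit of \cite{KLR15} to identify which of the finitely many wells $SO(n)U_j$ each tetrahedron of $\mathcal{T}_m$ corresponds to, together with a uniform perimeter bound on the resulting phase function; and second, to reduce the problem on each phase to an incompatible one-well rigidity problem for which the results of \cite{LL17, LL16} apply.

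\emph{Spin argument and perimeter bound.} By \eqref{eq:compare}, \eqref{eq:energy} and \eqref{eq:non_deg}, on each tetrahedron $T_\alpha$ either $\dist(\nabla u_m|_{T_\alpha}, K) < d/4$, which by the separation \eqref{eq:dist} singles out a unique well index $j_m(\alpha) \in \{1,\dots,k\}$ (``good'' tetrahedron), or $T_\alpha$ is ``bad'' and carries energy of order $m^{-n}$. Hence the total volume of bad tetrahedra is $O(m^{-1})$. Define the spin field $\chi_j^m := \mathbf{1}_{\{j_m(\cdot) = j\}}$ on the good set. Across a shared $(n-1)$-face of two good adjacent tetrahedra $T_\alpha, T_\beta$ with normal $b$ and $j_m(\alpha) \neq j_m(\beta)$, continuity of $u_m$ (Hadamard's jump condition) forces $\nabla u_m|_{T_\alpha} - \nabla u_m|_{T_\beta}$ to be rank-one with normal $b$; combining this with the non-rank-one lower bound \eqref{eq:d}, which is guaranteed by the incompatibility assumption \eqref{eq:incomp}, one infers that at least one of the two tetrahedra must in fact be bad. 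Hence the total $(n-1)$-area of phase interfaces is at most $C \cdot m^{n-1} \cdot m^{-(n-1)} = C$, yielding $\sup_m \sum_j |D\chi_j^m|(\Omega) < \infty$. Standard $BV$-compactness then produces a limiting Caccioppoli partition $\Omega = \bigcup_j \Omega_j$, which decomposes into its indecomposable components $\Omega_{j,i}$ with $\sum_{j,i} |D\chi_{j,i}|(\Omega) < \infty$.

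\emph{Reduction to an incompatible one-well problem.} On each good tetrahedron set $F_m(x) := \nabla u_m(x)\, U_{j_m(\alpha)}^{-1}$, extended in a bounded, measurable way on the bad set. Then $\dist(F_m, SO(n))$ is controlled in $L^2$ with surface-energy scaling $m^{-1}$. Crucially, $F_m$ is typically \emph{not} a gradient: its distributional $\Curl$ is concentrated on phase interfaces and on the bad set, and the previous step shows that $\|\Curl F_m\|_{\mathcal{M}}$ is uniformly bounded. This puts us into the hypothesis of the $n$-dimensional strong-supercritical and weak-critical incompatible one-well rigidity of \cite{LL17}, whose consequences in \cite{LL16} now yield, on each indecomposable component $\Omega_{j,i}$, a single rotation $R_{j,i} \in SO(n)$ such that $F_m \to R_{j,i}$ in $L^2(\Omega_{j,i})$, and hence $\nabla u_m \to R_{j,i} U_j$ in $L^2(\Omega_{j,i})$.

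\emph{Assembly and the main obstacle.} Gluing the piecewise limits, summing over $(j,i)$ using the uniform perimeter bound, and normalizing $u_m$ by an additive constant via Poincaré's inequality yields a limit $u \in W^{1,\infty}(\Omega)$ with $\nabla u = \sum_{j,i} R_{j,i} U_j \chi_{j,i} \in BV(\Omega, K)$ and $\nabla u_m \to \nabla u$ in $L^2(\Omega)$. The main obstacle is the tight interplay between the two ingredients: the spin argument is needed to set up both the partition and the auxiliary field $F_m$, while the one-well rigidity is what upgrades ``gradient close to a well'' to ``gradient close to a single rotation of that well''. Technically, the most delicate step is to verify that the incompatibility of $F_m$ concentrated on phase interfaces is controlled in the norm required by the rigidity estimate of \cite{LL17}, i.e.\ that it falls within the surface-energy regime of that result.
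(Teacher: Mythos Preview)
Your spin argument and the resulting Caccioppoli partition are essentially those of the paper (Proposition~\ref{lem:spin_arg}). The genuine gap is in the construction of the auxiliary one-well field. You set $F_m(x)=\nabla u_m(x)\,U_{j_m(\alpha)}^{-1}$ and assert that $\Curl F_m$ is supported only on phase interfaces and on the bad set, hence has uniformly bounded mass. This is incorrect: right-multiplication of a gradient by a constant invertible matrix does \emph{not} preserve curl-freeness. On a face between two good tetrahedra in the \emph{same} phase $j$, Hadamard gives $[\nabla u_m]=a\otimes b$ and hence $[F_m]=a\otimes U_j^{-1}b$; since $U_j^{-1}b$ is in general not parallel to the face normal $b$, this jump is not of gradient type and contributes to $\Curl F_m$. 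Summing this contribution over the $\sim m^{n}$ faces of $\mathcal{T}_m$, the energy bound \eqref{eq:energy} only yields $|\Curl F_m|(\Omega)\lesssim m^{1/2}$, and one can construct admissible sequences $u_m$ (e.g.\ piecewise affine interpolants of a smooth map into $SO(n)U_j$ perturbed by an $O(m^{-1/2})$ oscillation) realising this blow-up. Proposition~\ref{prop:LL_P3} is therefore not applicable to your $F_m$.

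The paper's remedy is to treat each well separately and to perform the change of variables $y=U_j x$: then
\[
A_{j,m}(y):=\chi_j(U_j^{-1}y)\,\nabla u_m(U_j^{-1}y)\,U_j^{-1}
=\chi_j(U_j^{-1}y)\,\nabla_y\bigl(u_m\circ U_j^{-1}\bigr)(y)
\]
is a characteristic function times a \emph{genuine} gradient, so that $|\Curl A_{j,m}|\le C|D\chi_j|$ as measures, with the right-hand side independent of $m$. Two further ingredients you omit are needed to make this estimate work: a preliminary Lipschitz truncation of $u_m$ (so that $\|\nabla u_m\|_{L^\infty}$ is uniform in $m$ and the product rule for the curl applies), and the use of the \emph{limiting} indicator $\chi_j$ rather than the discrete spin $\chi_j^m$. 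Finally, note that Proposition~\ref{prop:LL_P3} does not directly hand you one rotation per indecomposable component; it yields a limit $A_j\in BV(\tilde\Omega_j,SO(n))$ with $|DA_j|\le C|\Curl A_j|\le C|D\chi_j|$, from which one still has to argue that $J_{A_j}\subset J_{\chi_j\circ U_j^{-1}}$ and hence that $A_j$ is constant on each indecomposable component of $\tilde\Omega_j$.
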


Here and in the sequel we use the convention that we denote the full (distributional) derivative of a ($BV$) function $\chi$ by $D \chi$ and use $\nabla \chi$ for its absolutely continuous part. The total variation measure of a $BV$ function is denoted by $|D\chi|(\cdot)$.\\

As in \cite{ALP17}, Lemma 4.2, in addition to the structure result of Theorem \ref{prop:BV_energy} one can show that the jump interfaces of $\nabla u$
are locally flat and can only intersect in ``corners" (i.e. lower dimensional
objects).\\

The main idea of the proof of Theorem \ref{prop:BV_energy} is to combine the
``spin-argument" from Lemma 2.1 in \cite{KLR15} (which we briefly recall in Section
\ref{sec:spin}), which was simultaneously also derived in \cite{ALP17}, with the
one-well, incompatible supercritical rigidity estimate from \cite{LL16} (c.f. also
\cite{LL17}). Here the reduction of the multi-well problem to an (incompatible)
auxiliary one-well problem is of particular interest and constitutes the main
novelty of the proof. We repeat that while in the context of the modelling and analysis of shape memory alloys the compactness result itself is not new, the generality in which it holds (c.f. the physical models covered in Section \ref{sec:gen})
and the argument for this result are new and of interest in themselves.

\section{The ``Spin-Argument"}
\label{sec:spin}

For self-containedness and completeness, we briefly recall the ``spin-argument" from
\cite{KLR15} and \cite{ALP17}. As a slight extension with respect to the argument
from \cite{KLR15} we directly prove it in arbitrary dimension.

\begin{lem}
\label{lem:incomp}
Let $n\geq 2$.
Let $\mathcal{T}_m$ be as in Section \ref{sec:grid} and assume that $T_{i_1},
T_{i_2} \in \mathcal{T}_m$ are adjacent grid tetrahedra, i.e. assume that $T_{i_1}$
and $T_{i_2}$ have a common $(n-1)$-dimensional interface with interface normal
$b\in S^{n-1}$. Let 
\begin{align}
\label{eq:c_0}
c_0:= \min\{\bar{d},d\},
\end{align}
where $d, \bar{d}>0$ are the constants from \eqref{eq:dist} and \eqref{eq:d}.
Suppose further that for $u\in \mathcal{A}_m$ 
\begin{align}
\label{eq:close_a}
|\nabla u|_{T_{i_1}} - U_{i_1}| \leq \frac{c_0}{100 },
\end{align}
but 
\begin{align}
\label{eq:far_a}
|\nabla u|_{T_{i_2}} - Q U_{i_1}| > \frac{c_0}{100 }
\end{align}
for all $Q \in SO(n)$. Then, 
\begin{align}
\label{eq:far}
\dist(\nabla u|_{T_{i_2}}, K) > \frac{c_0}{100 }.
\end{align}
\end{lem}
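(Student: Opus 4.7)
The plan is to argue by contradiction: assume that, in addition to \eqref{eq:close_a} and \eqref{eq:far_a}, we have $\dist(\nabla u|_{T_{i_2}}, K) \leq c_0/100$, and derive a contradiction by showing that the interface normal $b$ between $T_{i_1}$ and $T_{i_2}$ would then be (almost) a rank-one connection direction between two wells, which is ruled out quantitatively by \eqref{eq:d}.

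More precisely, first I would use the failure of the distance bound to produce an index $j\in\{1,\dots,k\}$ and a rotation $\tilde Q\in SO(n)$ with $|\nabla u|_{T_{i_2}} - \tilde Q U_j| \leq c_0/100$. The hypothesis \eqref{eq:far_a} forces $j\neq i_1$. Next, since $u\in \mathcal{A}_m$ is continuous and piecewise affine and $T_{i_1}, T_{i_2}$ share an $(n-1)$-dimensional interface with normal $b$, the Hadamard compatibility condition gives a vector $a\in \R^n$ with
\begin{equation*}
\nabla u|_{T_{i_1}} - \nabla u|_{T_{i_2}} = a \otimes b.
\end{equation*}
Combining this identity with \eqref{eq:close_a} and the closeness of $\nabla u|_{T_{i_2}}$ to $\tilde Q U_j$, I obtain
\begin{equation*}
U_{i_1} - \tilde Q U_j = a\otimes b + E,\qquad |E| \leq \tfrac{c_0}{50}.
\end{equation*}

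The key step is then to test this identity against vectors tangent to the interface. Because the triangulation is incompatible in the sense of \eqref{eq:incomp}, the normal $b$ is admissible for the outer minimization in \eqref{eq:d}. Choose any orthonormal basis $\tau_1,\dots,\tau_{n-1}$ of $b^\perp$; applying $(U_{i_1} - \tilde Q U_j)$ to each $\tau_l$ kills the rank-one term $a\otimes b$ and leaves only $E\tau_l$, so
\begin{equation*}
\max_{l=1,\dots,n-1}|(U_{i_1} - \tilde Q U_j)\tau_l| \leq |E| \leq \tfrac{c_0}{50}.
\end{equation*}
On the other hand, \eqref{eq:d} applied with $i_1\neq j$, $Q=\tilde Q$ and this choice of $\tau_l$'s yields $\max_l|(U_{i_1}-\tilde Q U_j)\tau_l|\geq \bar d\geq c_0$, contradicting the previous bound.

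The main obstacle I anticipate is not analytic but rather bookkeeping: making sure the constants line up (so that $c_0/50 < \bar d$, which is automatic from $c_0 \leq \bar d$), and, more subtly, verifying that the interface normal $b$ genuinely falls within the class over which the minimum in \eqref{eq:d} is taken. The latter is exactly what \eqref{eq:incomp} guarantees, and Remark \ref{rmk:lower} ensures that the resulting lower bound $\bar d$ is independent of the scale $m$, so the estimate is uniform in the sequence $\{u_m\}$. Everything else reduces to the standard Hadamard jump relation for piecewise affine, continuous deformations.
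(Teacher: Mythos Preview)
Your proposal is correct and follows essentially the same route as the paper: assume by contradiction that $\nabla u|_{T_{i_2}}$ is close to some $\tilde Q U_j$ with $j\neq i_1$, use the Hadamard jump relation across the common facet to kill the rank-one part when testing against tangent vectors $\tau\perp b$, and contradict the quantitative lower bound $\bar d$ from \eqref{eq:d}. The only cosmetic difference is that you package the two error contributions into a single matrix $E$ with $|E|\leq c_0/50$, whereas the paper keeps them separate in a direct chain of inequalities; the underlying estimate is identical.
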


\begin{proof}
We show that for any $j\in\{1,\dots,k\}\setminus \{i_1\}$ and for any $Q\in SO(n)$
we have
\begin{align}
\label{eq:toshow}
|\nabla u|_{T_{i_2}}-Q U_{j}| > \frac{c_0}{100 },
\end{align}
where $c_0>0$ (c.f. Remark \ref{rmk:lower}) is the constant from \eqref{eq:c_0}.
By our assumption \eqref{eq:far_a} the claim of Lemma \ref{lem:incomp} follows, once
\eqref{eq:toshow} is shown.

To this end we note that as $T_{i_1}, T_{i_2}$ are neighbouring grid tetrahedra,
they share a common $(n-1)$-dimensional interface with normal $b\in S^{n-1}$. In
particular,
\begin{align}
\label{eq:facet}
(\nabla u|_{T_{i_1}}-\nabla u|_{T_{i_2}})\tau = 0,
\end{align}
for all $\tau \in S^{n-1}$ with $\tau\cdot b = 0$. By virtue of \eqref{eq:close_a}
and \eqref{eq:facet} we infer that
\begin{align}
\label{eq:facet1}
\max\limits_{\substack{\tau \in S^{n-1}, \ \tau \cdot b =0 }}|(U_{i_1}-\nabla u|_{T_{i_2}})\tau| \leq \frac{c_0}{100 }.
\end{align}
Now assuming that \eqref{eq:toshow} was wrong, we would obtain the existence of
$j_0\in\{1,\dots,k\}\setminus \{i_1\}$ and $\bar{Q}\in SO(n)$ such that
\begin{align}
\label{eq:contra}
|\nabla u|_{T_{i_2}}- \bar{Q}U_{j_0}|\leq \frac{c_0}{100}.
\end{align}
This however yields a contradiction: Indeed, by definition of $c_0$ and $\bar{d}$ (c.f. \eqref{eq:d})
\begin{align*}
|\nabla u|_{T_{i_2}}- \bar{Q} U_{j_0}|
&\geq  |(U_{i_1}-\bar{Q} U_{j_0})\tau_0| - \max\limits_{\tau \in S^{n-1}, \ \tau \cdot b = 0} |(\nabla u|_{T_{i_1}}-U_{i_1})\tau|\\
& \quad - \max\limits_{\tau \in S^{n-1}, \ \tau \cdot b = 0}  |(\nabla u|_{T_{i_1}}-\nabla u|_{T_{i_2}})\tau|
\\
& \geq c_0 -  \frac{c_0}{100 }  - \frac{c_0}{100 } > \frac{c_0}{100 }.
\end{align*}
Here the vector $\tau_0 \in S^{n-1}$, $\tau_0 \cdot b =0$ is chosen such that the inequality \eqref{eq:d} holds.
This yields contradiction to \eqref{eq:contra}, which hence concludes the argument
for the lemma.
\end{proof}

\begin{lem}
\label{lem:count}
Let $n\geq 2$, and assume that the conditions from Section \ref{sec:setup} hold.
Then,
\begin{align}
\label{eq:count}
\#\left\{ \alpha \in I_m: \dist^2(\nabla u_m |_{T_{\alpha}},K) \geq
\frac{c_0^2}{100^2} \right\} \leq \frac{C 100^2}{ c_0^2 c_{1}c_{1,T}} m^{n-1}.
\end{align}
\end{lem}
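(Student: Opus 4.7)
The plan is to argue by a direct Chebyshev-type estimate, chaining together the three standing quantitative assumptions from Section~\ref{sec:setup}: the energy bound \eqref{eq:energy}, the comparability condition \eqref{eq:compare}, and the non-degeneracy of the triangulation \eqref{eq:non_deg}. There is no need to invoke the spin argument from Lemma~\ref{lem:incomp} here; this lemma is a purely volumetric counting bound that will later feed into the arguments in Section~\ref{sec:proof}.

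First I would chain \eqref{eq:energy} and \eqref{eq:compare}: since $\nabla u_m$ is constant on every $T_\alpha$, we have
\begin{align*}
c_1 \sum\limits_{\alpha \in I_m} \dist^2(\nabla u_m|_{T_\alpha},K)\, |T_\alpha|
\leq \int\limits_{\Omega} h(\nabla u_m)\,dx = E_m(u_m) \leq C m^{-1}.
\end{align*}
Next I would localize this to the set of ``bad'' tetrahedra
\begin{align*}
B_m := \left\{ \alpha \in I_m : \dist^2(\nabla u_m|_{T_\alpha},K) \geq \tfrac{c_0^2}{100^2} \right\}.
\end{align*}
Restricting the above sum to $\alpha \in B_m$ and using the lower bound \eqref{eq:non_deg}, namely $|T_\alpha| \geq c_{1,T} m^{-n}$, gives
\begin{align*}
c_1 \cdot \tfrac{c_0^2}{100^2} \cdot c_{1,T} \, m^{-n} \cdot \# B_m \leq C m^{-1},
\end{align*}
which upon rearranging yields exactly \eqref{eq:count}.

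The argument is essentially just Markov's inequality together with the volume non-degeneracy of $\mathcal{T}_m$; there is no genuine obstacle. The only thing worth being careful about is making sure the constants are tracked as in \eqref{eq:count}: the numerator picks up the energy constant $C$ from \eqref{eq:energy} and the factor $100^2$ from the threshold, while the denominator contains the comparability constant $c_1$, the square of the separation constant $c_0^2$, and the volume constant $c_{1,T}$ from \eqref{eq:non_deg}. The factor $m^{n-1}$ then arises from the cancellation of $m^{-n}$ against the available $m^{-1}$ in the energy bound, which is what one expects for an estimate of surface-energy type.
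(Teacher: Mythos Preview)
Your proof is correct and follows essentially the same Chebyshev-type counting argument as the paper: both combine the energy bound \eqref{eq:energy}, the comparability condition \eqref{eq:compare}, and the lower volume bound \eqref{eq:non_deg} to estimate the number of tetrahedra with large distance to $K$. The constants are tracked exactly as in the paper's proof.
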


\begin{proof}
The claim follows from the energy estimate \eqref{eq:energy} and a counting
argument. Indeed, let $N:=\#\{  \alpha \in I_m: \dist^2(\nabla u_m |_{T_{\alpha}},K)
\geq \frac{c_0^2}{100^2} \}$. Then, by \eqref{eq:energy}, \eqref{eq:compare} and
\eqref{eq:non_deg}
\begin{align*}
N c_1 100^{-2} c_0^2  c_{1,T} m^{-n}
\leq N \dist^2(\nabla u_m |_{T_{\alpha}},K)\inf\limits_{\alpha}|T_{\alpha}| 
 \leq E_m(u) \leq  C m^{-1} 
\end{align*}
Solving for $N$ yields the claim.
\end{proof}

\begin{prop}
\label{lem:spin_arg}
Let $n\geq 2$ and suppose that the conditions from Section \ref{sec:setup} are
valid. Assume that $\{u_m\}_{m\in \N}$ is a sequence of deformations satisfying
\eqref{eq:energy}. Let further for $j\in\{1,\dots,k\}$
\begin{align*}
\Omega_{j,m}&:=\{T_{\alpha}\in \mathcal{T}_m: \ \dist(\nabla
u_m|_{T_{\alpha}},SO(n)U_j) \leq \frac{c_0}{100} \},\\
\Omega_{b,m}&:=\{T_{\alpha}\in \mathcal{T}_m: \ \dist(\nabla u_m|_{T_{\alpha}},K) >
\frac{c_0}{100} \}.
\end{align*}
Then there exist Caccioppoli sets $\Omega_1,\dots,\Omega_k \subset \Omega$ such that
\begin{align*}
\chi_{\Omega_j,m} \rightarrow \chi_{\Omega_j}, \ \chi_{\Omega_{b,m}} \rightarrow
0\mbox{ in } L^1(\Omega), \quad \Omega = \bigcup\limits_{j=1}^{k}\Omega_j.
\end{align*}
Here $\chi_{\Omega_{j,m}}, \chi_{\Omega_{b,m}}, \chi_{\Omega_j}:\Omega \rightarrow
\{0,1\}$ denote the characteristic functions associated with the corresponding sets.
\end{prop}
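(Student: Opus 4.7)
The plan is to combine the volume estimate implicit in Lemma \ref{lem:count} with the spin argument of Lemma \ref{lem:incomp} to derive a uniform $BV$-bound on each of the characteristic functions $\chi_{\Omega_{j,m}}$, and then to invoke standard $BV$-compactness. The bad set is easy: since \eqref{eq:non_deg} gives $|T_\alpha|\leq c_{2,T}m^{-n}$, Lemma \ref{lem:count} yields
\[
|\Omega_{b,m}|\leq c_{2,T}m^{-n}\cdot\#\Omega_{b,m}\leq C m^{-1},
\]
so that $\chi_{\Omega_{b,m}}\to 0$ in $L^1(\Omega)$.

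For the sets $\Omega_{j,m}$, the key observation is that Lemma \ref{lem:incomp} is \emph{covariant} under the $SO(n)$-action on the target: if a tetrahedron $T\in\Omega_{j,m}$ has $|\nabla u|_T - R\,U_j|\leq c_0/100$ for some $R\in SO(n)$, then applying Lemma \ref{lem:incomp} to $R^T u$ with well $U_j$ (and using that $K$ is $SO(n)$-invariant) shows that every tetrahedron $T'$ sharing a face with $T$ either satisfies $|\nabla u|_{T'}-Q\,U_j|\leq c_0/100$ for some $Q\in SO(n)$, in which case $T'\in\Omega_{j,m}$, or lies in $\Omega_{b,m}$. In other words, no face of the triangulation separates $\Omega_{j,m}$ from $\Omega_{j',m}$ with $j'\neq j$, so the interior interface of $\Omega_{j,m}$ in $\Omega$ is covered by boundaries of tetrahedra belonging to $\Omega_{b,m}$. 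The non-degeneracy \eqref{eq:non_deg} forces each tetrahedral boundary to have $(n-1)$-Hausdorff measure at most $Cm^{-(n-1)}$, so Lemma \ref{lem:count} yields
\[
|D\chi_{\Omega_{j,m}}|(\Omega)\leq C\,\#\Omega_{b,m}\cdot m^{-(n-1)}\leq C'
\]
with $C'$ independent of $m$.

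Together with the trivial $L^\infty$-bound, the standard compactness theorem for $BV$ functions then produces a (not relabelled) subsequence along which $\chi_{\Omega_{j,m}}\to\chi_{\Omega_j}$ in $L^1(\Omega)$ simultaneously for every $j\in\{1,\dots,k\}$. The pointwise a.e.\ limit of a sequence of $\{0,1\}$-valued functions is again $\{0,1\}$-valued, so each $\Omega_j$ is a set of finite perimeter. Finally, the partition identity $\Omega=\bigcup_{j=1}^{k}\Omega_j$ is obtained by passing to the $L^1$-limit in $\sum_{j=1}^{k}\chi_{\Omega_{j,m}}+\chi_{\Omega_{b,m}}=1$ a.e.

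The only genuinely delicate point is the rotation-covariance of Lemma \ref{lem:incomp} sketched above; once this reduction is in place the proof boils down to the counting bound and routine $BV$-compactness.
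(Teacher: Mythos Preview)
Your proof is correct and follows essentially the same route as the paper: both arguments use Lemma~\ref{lem:incomp} to show that every interior boundary face of $\Omega_{j,m}$ is a face of a tetrahedron in $\Omega_{b,m}$, invoke Lemma~\ref{lem:count} to bound the number of such tetrahedra by $Cm^{n-1}$, deduce a uniform perimeter bound, and conclude by $BV$-compactness. Your treatment of the $SO(n)$-covariance of Lemma~\ref{lem:incomp} (applying it to $R^T u$) is more explicit than the paper's, which simply invokes the lemma without commenting on the rotation; and your direct volume estimate $|\Omega_{b,m}|\le Cm^{-1}$ for the bad set is slightly more economical than the paper's detour through a perimeter bound on $\Omega_{b,m}$.
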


\begin{proof}
The proof follows from the Lemmas \ref{lem:incomp} and \ref{lem:count}:
Indeed, 
by Lemma \ref{lem:incomp} for $j\in\{1,\dots,k\}$
\begin{align*}
\Per(\Omega_{j,m}) &\leq \#\{T_{\alpha}\in \mathcal{T}_m: \ \dist(\nabla
u|_{T_{\alpha}},SO(n)U_j) > \frac{c_0}{100} \} \max\limits_{T_{\alpha}\in
\mathcal{T}_m}|\partial T_{\alpha}|\\
& \leq  \frac{C 100^2}{ c_0^2 c_{1}c_{1,T}} m^{n-1} 3 \tilde{c}_{2,T} m^{1-n} \leq
\frac{3 C 100^2 \tilde{c}_{2,T}}{ c_0^2 c_{1}c_{1,T}}.
\end{align*}
Here we used that at boundary tetrahedra, Lemma \ref{lem:incomp} asserts that the
local energy is larger than $\frac{c_0}{100}$. Hence, in particular, 
$|D \chi_{\Omega_{j,m}}|(\Omega) \leq \frac{3 C 100^2 \tilde{c}_{2,T}}{ c_0^2
c_{1}c_{1,T}}$, which yields that along a subsequence $\chi_{\Omega_{j,m}} \rightarrow \chi_{\Omega_j} $
in $L^1(\Omega)$ and $\chi_{\Omega_j} \in BV(\Omega)$. As $\Per(\Omega_{b,m})\leq \sum\limits_{j=1}^{k}\Per(\Omega_{j,m})$ we thus also obtain a similar uniform perimeter bound for $\Omega_{b,m}$ and therefore $\chi_{\Omega_{b,m}} \rightarrow \Omega_{b}$ along a further subsequence. 
Since $\Omega=\Omega_{b,m}\cup\bigcup\limits_{j=1}^{k}\Omega_{j,m}$, the sets $\Omega_{b,m}, \Omega_{1,m},\dots,\Omega_{k,m}$ hence form a Caccioppoli partitioning of $\Omega$ with a uniform (in $m$) perimeter bound. As a consequence, $\Omega_b, \Omega_1,\dots, \Omega_k$ also form a Caccioppoli partitioning of $\Omega$ (c.f. Theorem 4.19 in \cite{AFP}).

Finally, we note that $|\Omega_{b}|=0$, as by Lemma \ref{lem:count} we have that
\begin{align*}
|\Omega_b| 
&\leq \#\{T_{\alpha}\in \mathcal{T}_m: \ \dist(\nabla u|_{T_{\alpha}},SO(n)U_j) >
\frac{c_0}{100} \} \max\limits_{T_{\alpha}\in \mathcal{T}_m}|T_{\alpha}|\\
&\leq \frac{C 100^2}{ c_0^2 c_{1}c_{1,T}} m^{n-1} c_{1,T} m^{-n} \\
&= C 100^2 (c_0^2 c_1 )^{-1} m^{-1} \rightarrow 0 \mbox{ as } m \rightarrow \infty.
\end{align*}
This concludes the argument.
\end{proof}

\section{Proof of Theorem \ref{prop:BV_energy}}
\label{sec:proof}

\subsection{A compactness result for incompatible fields close to rotations}

With the result of Lemma \ref{lem:spin_arg} at hand, we approach the proof of our
main result. In this context, we will frequently use the following notation: We will often identify vector fields $v \in L^2(\Omega,\R^n)$ with co-vectorfields $\omega= v^{\flat} = \sum\limits_{j=1}^n v_j dx_j \in L^2(\Omega, \Lambda^1)$. If $A\in L^2(\Omega, \R^{n\times n})$ we correspondingly identify it with $\omega \in L^2(\Omega,\Lambda^1)^n$. 
With slight abuse of notation for $v \in C^{\infty}_0(\Omega,\R^n)$ we further do not distinguish between $\Curl(v)=(* dv^{\flat})^{\sharp}$ and the two-form $d v^{\flat}=d \omega$, where
\begin{align*}
d v^{\flat} = \sum\limits_{j<k} \left( \frac{\partial v^j}{\partial x_k} - \frac{\partial v^k}{\partial x_j} \right) dx^j \wedge dx^k.
\end{align*}
If $v$ is less regular, we interpret $\frac{\partial v^j}{\partial x_k} - \frac{\partial v^k}{\partial x_j} $ distributionally. If $\frac{\partial v^j}{\partial x_k} - \frac{\partial v^k}{\partial x_j} $ is a Radon measure, we write $|\Curl(v)|(\cdot)$ to denote its total variation. We denote the set of bounded Radon measures on $\Omega$ with values in $\Lambda^k$ by $\mathcal{M}_b(\Omega,\Lambda^k)$. Analogously, we use the notation $\mathcal{M}_b(\Omega,\Lambda^k)^n$ for vectors of bounded Radon measures.
\\

With this preparation at hand, we recall a consequence of the estimates in \cite{LL16},
which will be of central relevance to us:

\begin{prop}[Proposition 3 in \cite{LL17}]
\label{prop:LL_P3}
Let $\Omega \subset \R^n$, $n\geq 2$, be a bounded, open, simply connected set and
consider a sequence of matrix fields $A_j \in L^2(\Omega, \R^{n\times n})$ such that
\begin{align*}
\lim\limits_{j \rightarrow \infty} \|\dist(A_j, SO(n))\|_{L^2(\Omega)} = 0, \quad
\sup\limits_{j\in \N} |\Curl(A_j)|(\Omega)\leq C,
\end{align*}
where the operator $\Curl$ is understood as explained above.

Then, up to subsequences, $\{A_j\}_{j\in\N}$ converges strongly in $L^2(\Omega)$ to
a matrix field $A\in BV(\Omega, SO(n))$ and 
\begin{align*}
|D A|(\Omega) \leq C |\Curl(A)|(\Omega).
\end{align*}
\end{prop}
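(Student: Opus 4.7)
The plan is to deduce Proposition \ref{prop:LL_P3} from the local, quantitative rigidity estimate proved in \cite{LL16}. This estimate plays the role of an incompatible Friesecke--James--M\"uller inequality: on any sufficiently regular sub-domain $U \Subset \Omega$ there exists $R_U \in SO(n)$ with
\begin{equation*}
\|A - R_U\|_{L^2(U)} \leq C\bigl( \|\dist(A, SO(n))\|_{L^2(U)} + |\Curl(A)|(U) \bigr),
\end{equation*}
where $C$ depends only on $n$ and on the shape of $U$. I would take this inequality as the core black-box input from \cite{LL16} and build the compactness and structure statement on top of it.

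First I would establish the strong $L^2$-compactness. Cover $\Omega$ by a family of small open cubes (or use a Whitney-type decomposition) and apply the local rigidity to $A_j$ on each cube; this produces a piecewise constant, rotation-valued approximant $\tilde{A}_j$ with $\|A_j - \tilde{A}_j\|_{L^2(\Omega)}$ uniformly small, thanks to the assumed bounds on $\|\dist(A_j, SO(n))\|_{L^2}$ and $|\Curl(A_j)|(\Omega)$. Because $\tilde{A}_j$ is bounded and its jumps across adjacent cubes are controlled by the curl on neighbouring elements, a Fr\'echet--Kolmogorov argument extracts a subsequence with $\tilde{A}_j$, and hence also $A_j$, converging strongly in $L^2(\Omega, \R^{n\times n})$ to a limit $A$. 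The hypothesis $\dist(A_j, SO(n)) \to 0$ in $L^2$ then forces $A(x) \in SO(n)$ almost everywhere.

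To identify $A \in BV(\Omega, SO(n))$ and obtain the bound $|DA|(\Omega) \leq C |\Curl(A)|(\Omega)$, I would exploit the pointwise constraint $A A^T = \mathrm{Id}$. Differentiating yields $(\de A)\, A^T + A\, (\de A)^T = 0$, so that $(\de A) A^T$ is skew-symmetric; combined with $|\de A| = |(\de A)\, A^T|$ (right multiplication by an orthogonal matrix is a Frobenius isometry), this says that the full gradient of $A$ is encoded in the skew-symmetric, curl-type information. I would first establish the corresponding measure-level inequality on smooth $SO(n)$-valued maps, and then transfer it to the limit by mollification and the lower semicontinuity of the total variation. Finally, passing to the limit in the curl bound via lower semicontinuity gives $|\Curl(A)|(\Omega) \leq \liminf_j |\Curl(A_j)|(\Omega) \leq C$, which closes the argument.

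The main obstacle is turning the smooth, pointwise algebraic identity into the measure-theoretic bound $|DA|(\Omega) \leq C |\Curl(A)|(\Omega)$ with a universal constant. Here simple connectedness of $\Omega$ is essential: without it, non-trivial $SO(n)$-holonomies around non-contractible loops can inflate $|DA|$ without contributing to $|\Curl(A)|$. On a simply connected domain, a Poincar\'e-type argument converts the skew-symmetry of $(\de A) A^T$ into the required integrable control.
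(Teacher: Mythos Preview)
Your compactness step is close in spirit to the paper's use of the supercritical rigidity estimate (Theorem~\ref{thm:strong_supercritical}), though the paper argues differently: it takes a weak $L^2$ limit, applies the estimate at each scale and location to show the limit lies in $SO(n)$ off an $(n-1)$-dimensional set, and then upgrades to strong convergence via norm convergence. Note also that the paper's supercritical estimate reads $\|A-R\|_{L^p}^p \le C\bigl(\|\dist(A,SO(n))\|_{L^p}^p + |\Curl A|(B)^{n/(n-1)}\bigr)$; the exponent $n/(n-1)$ on the curl term, rather than the additive form you wrote, is what makes it scale-invariant.

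The genuine gap is in your argument for $|DA|(\Omega)\le C|\Curl A|(\Omega)$. The pointwise algebraic fact---that for a smooth $SO(n)$-valued field the full gradient is a linear expression in $\Curl A$ with coefficients built from $A$---is correct, but your transfer by mollification fails: the mollified field $A_\epsilon$ is \emph{not} $SO(n)$-valued (near a jump of $A$ it is a convex average of rotations and may be far from $SO(n)$, even singular), so the identity does not hold for $A_\epsilon$, and you have no a priori knowledge that $A\in BV$ allowing you to manipulate $DA$ directly as a measure. The paper closes this gap with a \emph{second}, scale-critical rigidity estimate at the endpoint Lorentz exponent $L^{n/(n-1),\infty}$ (Theorem~\ref{thm:weak_critical}). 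Its scaling is exactly right for a BV bound: tile by cubes of side $\rho$, apply the critical estimate on overlapping balls to select rotations $R_i^{(\rho)}$, and use that weak and strong $L^{n/(n-1)}$ norms agree on constants to bound each neighbour jump $|R_i^{(\rho)}-R_j^{(\rho)}|\cdot\rho^{n-1}$ by the local curl mass. Summing gives a uniform BV bound on the piecewise constant approximants $A_\rho$, whence $A\in BV$ with the desired control. The interplay of \emph{two} estimates---supercritical for compactness, critical for the BV structure---is the idea your single $L^2$ black box cannot replace.
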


For the convenience of the reader, we include here a sketch of the proof of Proposition~\ref{prop:LL_P3}. The starting point is the identity

\begin{equation}
    \label{eq:loo}
	T\de \omega + \de T\omega = \omega,
\end{equation}
where $T$ is the \emph{averaged linear homotopy operator} (cf.~\cite{LL17}), which holds for every differential form $\omega \in L^1(B, \Lambda^r)$ on the unit ball $B = B(0, 1) \subset \mathbb{R}^n$, whose exterior derivative is a bounded Radon measure $\de \omega \in \mathcal{M}_b(B, \Lambda^{r+1})$. Since it can be easily seen that $T$ is a ``weakly singular'' operator (that is it maps $L^{\infty}$ continuously into itself), the weak geometric rigidity estimate~\cite[Corollary 4.1]{CDM14} gives the following \emph{weak critical (i.e. for the exponent $1^*:=\frac{n}{n-1}$) geometric rigidity estimate for incompatible fields} (cf.~\cite[Theorem 3]{LL17}):

\begin{thm}
 \label{thm:weak_critical}
	Let $B:=B(0, 1) \subset \mathbb{R}^n$ with $n\geq 2$. There exists a constant  $C = C(n) > 0$, which depends only on the dimension $n$ such that for every matrix field $A \in L^{1^*, \infty}(B, \R^{n\times n})$ whose distributional $\Curl$ is a bounded Radon measure, i.e. $\de A \in \mathcal{M}_b(B, \Lambda^2)^n$ there exists an associated rotation $R\in SO(n)$ such that
	\begin{equation}
	 \label{eq:weak_critical}
	 \norm{A - R}_{L^{\frac{n}{n-1}, \infty}(B, \R^{n\times n})} \le C\rB{\norm{\dist(A, SO(n))}_{L^{\frac{n}{n-1}, \infty}(B, \R^{n\times n})} + \abs{\Curl(A)}(B)}.
	\end{equation}
\end{thm}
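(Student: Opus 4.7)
The plan is to reduce the \emph{incompatible} weak critical rigidity estimate to the \emph{compatible} weak geometric rigidity estimate of~\cite[Corollary 4.1]{CDM14} by means of the Bogovskii-type decomposition supplied by the averaged linear homotopy operator $T$ via the identity~\eqref{eq:loo}. To this end I would identify each row of the matrix field $A$ with a $1$-form, so that its distributional exterior derivative is a bounded, $\Lambda^2$-valued Radon measure, namely (a component of) $\Curl(A)$. Applying~\eqref{eq:loo} rowwise produces the decomposition
\[
A \;=\; T(\de A) + \de(TA) \;=:\; F + \nabla v,
\]
where $F$ carries the curl of $A$ and $\nabla v$ is a true gradient (since $\de\circ\de = 0$, the second term is automatically curl-free and, on the simply connected ball, a genuine gradient).

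Next, I would estimate $F$ in the Lorentz space $L^{1^*,\infty}$. The averaged homotopy operator is represented by an integral kernel of Riesz type of order one, which on the unit ball is dominated by $|x-y|^{-(n-1)}$; combined with the weakly singular mapping property of $T$ recalled in the excerpt, this yields the weak Young-type bound
\[
\|F\|_{L^{1^*,\infty}(B)} \;\le\; C(n)\,|\Curl(A)|(B),
\]
which first is proved for smooth forms and then extended to measure-valued $\de A$ by approximation. Since $\nabla v$ is a genuine gradient, \cite[Corollary 4.1]{CDM14} supplies an $R \in SO(n)$ with
\[
\|\nabla v - R\|_{L^{1^*,\infty}(B)} \;\le\; C(n)\,\|\dist(\nabla v, SO(n))\|_{L^{1^*,\infty}(B)}.
\]
The pointwise bound $\dist(\nabla v, SO(n)) \le |F| + \dist(A, SO(n))$, coming from $A = F + \nabla v$, together with the (quasi-)triangle inequality in $L^{1^*,\infty}$, feeds the previous $F$-bound back into the right-hand side. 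Writing $A - R = F + (\nabla v - R)$ and invoking one more Lorentz quasi-triangle inequality yields \eqref{eq:weak_critical}.

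The main obstacle is the justification of the weak mapping property $T\colon \mathcal{M}_b(B,\Lambda^{r+1}) \to L^{1^*,\infty}(B,\Lambda^r)$: the identity~\eqref{eq:loo} is pointwise and transparent on smooth forms, but extending it and the associated kernel estimate to bounded Radon measures requires a careful approximation argument and precise control on the singularity of the kernel of $T$, essentially the weak Young inequality for the Riesz potential $I_1$. A secondary technical point is the quasi-Banach nature of $L^{p,\infty}$, which forces explicit bookkeeping of the multiplicative constants each time the triangle inequality is used; these constants depend only on $n$ and $1^*$ and hence do not affect the form of the final estimate.
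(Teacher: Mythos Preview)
Your proposal is correct and follows essentially the same route as the paper's (very brief) sketch: the homotopy identity~\eqref{eq:loo} is used to split $A$ into a gradient part $\de(TA)$, to which the compatible weak rigidity estimate \cite[Corollary 4.1]{CDM14} applies, and an incompatible part $T(\de A)$, which is controlled by $|\Curl(A)|(B)$ via the kernel estimate for $T$. The only cosmetic difference is that the paper phrases the mapping property of $T$ as $L^\infty\to L^\infty$ (its definition of ``weakly singular''), whereas you invoke the more directly relevant endpoint $\mathcal{M}_b\to L^{1^*,\infty}$; both follow from the same $|x-y|^{-(n-1)}$ bound on the kernel, and your formulation is in fact the one actually needed in the argument.
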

Moreover, it can be checked that $\nabla T$ is the sum of a weakly singular operator and a Calder\'on-Zygmund operator. This remark gives, after straightforward computations, the following (cf.~\cite[Theorem 4]{LL17}):

\begin{thm}
 \label{thm:strong_supercritical}
 Let $B:=B(0, 1) \subset \mathbb{R}^n$ and $p > 1^*(n)$. There exists a constant  $C = C(n, p) > 0$, which depends only on the dimension $n$ and the exponent $p$, such that for every matrix field $A \in L^p(B, \R^{n\times n})$ such that $\de A^{\flat} \in \mathcal{M}_b(B, \Lambda^2)^n$, there exists an associated rotation $R\in SO(n)$ such that
 \begin{equation}
  \label{eq:strong_supercritical}
  \norm{A - R}_{L^p(B, \R^{n \times n})}^{p} \le C\cB{\norm{\dist(A, SO(n))}_{L^p(B, \R^{n \times n})}^{p} + \abs{\Curl(A)}^{\frac{n}{n-1}}(B)}
 \end{equation}
\end{thm}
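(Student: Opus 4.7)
My plan is to use the homotopy identity~\eqref{eq:loo} applied row-wise to the $1$-form $A^\flat$ in order to decompose the incompatible matrix field $A$ as the sum of a true gradient and a Riesz-type correction. Concretely, setting $v := T A^\flat$, the identity yields
\begin{equation*}
A = \nabla v + T\Curl(A),
\end{equation*}
so that the problem splits into a gradient-rigidity statement for $\nabla v$ and a suitable $L^p$-bound on the ``incompatibility part'' $T\Curl(A)$.

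Two mapping properties of the homotopy operator will be central. Since $T$ itself is weakly singular, with kernel of order $|x-y|^{-(n-1)}$, the standard Hardy--Littlewood--Sobolev estimate for Riesz potentials of Radon measures gives
\begin{equation*}
\|T\Curl(A)\|_{L^{n/(n-1),\infty}(B)} \leq C|\Curl(A)|(B),
\end{equation*}
which is already built into the proof of Theorem~\ref{thm:weak_critical}. The additional input now available, as emphasized in the paragraph preceding the statement, is that $\nabla T$ is the sum of a weakly singular operator and a Calder\'on--Zygmund operator; in particular, $\nabla T$ is bounded on $L^p(B)$ for every $p \in (1, \infty)$. Consequently $\nabla v = (\nabla T) A^\flat \in L^p(B)$ whenever $A \in L^p(B)$, and this is what drives the upgrade from the weak critical to the strong supercritical estimate.

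With the decomposition at hand, I would apply the classical Friesecke--James--M\"uller $L^p$-rigidity estimate to the genuine gradient $\nabla v$, obtaining a rotation $R \in SO(n)$ with
\begin{equation*}
\|\nabla v - R\|_{L^p(B)}^p \leq C\|\dist(\nabla v, SO(n))\|_{L^p(B)}^p.
\end{equation*}
Together with the pointwise bound $\dist(\nabla v, SO(n)) \leq \dist(A, SO(n)) + |T\Curl(A)|$ and the identity $A - R = (\nabla v - R) + T\Curl(A)$, this produces
\begin{equation*}
\|A - R\|_{L^p(B)}^p \leq C\rB{\|\dist(A, SO(n))\|_{L^p(B)}^p + \|T\Curl(A)\|_{L^p(B)}^p}.
\end{equation*}

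The hard part, and where most of the actual work occurs, is estimating $\|T\Curl(A)\|_{L^p(B)}^p$ by $|\Curl(A)|(B)^{n/(n-1)}$ modulo terms already controlled by $\|\dist(A, SO(n))\|_{L^p(B)}^p$. A pure Riesz potential of a general Radon measure does \emph{not} lie in $L^p$ for $p > n/(n-1)$, so the weak mapping property of $T$ alone is insufficient. The saving observation is that in our setting $T\Curl(A) = A - \nabla v$, and both $A$ and $\nabla v$ belong to $L^p(B)$ thanks to the Calder\'on--Zygmund character of $\nabla T$, so $T\Curl(A) \in L^p(B)$ automatically. I would then perform a layer-cake decomposition of $\int_B |T\Curl(A)|^p$, use the weak $L^{n/(n-1),\infty}$ bound from Theorem~\ref{thm:weak_critical} to control the distribution function at small levels, use the $L^p$-information carried by $A$ and $\nabla v$ (and their closeness to $SO(n)$) at large levels, and optimise the truncation threshold as a power of $|\Curl(A)|(B)$. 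Arranging this interpolation so that the exponent on $|\Curl(A)|(B)$ is exactly $n/(n-1)$, rather than a $p$-dependent power, is the delicate bookkeeping step that the authors describe as ``straightforward computations''.
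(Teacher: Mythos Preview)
Your overall strategy---decompose $A = \nabla v + T\Curl(A)$ via the homotopy identity~\eqref{eq:loo}, apply Friesecke--James--M\"uller $L^p$-rigidity to the gradient part $\nabla v$, and then control the remainder $g := T\Curl(A)$---matches the skeleton of the argument that the paper (deferring to \cite{LL17}) outlines. The paper's own account is no more detailed than yours: it simply asserts that the mapping properties of $\nabla T$ together with ``straightforward computations'' yield the result.

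There is, however, a genuine gap in your final step. You propose a layer-cake argument for $\|g\|_{L^p}^p$, using the weak-$L^{1^*}$ bound $|\{|g|>t\}|\le (C\mu/t)^{1^*}$ at small levels and the $L^p$-information on $A$ and $\nabla v$ at large levels. But the only large-level control you actually have is $|\{|g|>t\}|\le C(1+e^p+\|g\|_{L^p}^p)\,t^{-p}$ via Chebyshev, and integrating $t^{p-1}$ against $t^{-p}$ over $[t_0,\infty)$ diverges logarithmically; no choice of threshold $t_0$ closes the estimate. A concrete obstruction: if $\Curl A$ is a unit mass smeared uniformly over $B_\delta$, then $\|T\Curl A\|_{L^p}^p \sim \delta^{\,n-p(n-1)} \to \infty$ as $\delta\to 0$ while $|\Curl A|(B)=1$ stays fixed. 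The theorem survives this example only because $\|\dist(A,SO(n))\|_{L^p}^p$ is forced to blow up in tandem, yet your interpolation scheme treats $\|g\|_{L^p}$ and $e$ as if they could be decoupled. The bound $\|g\|_{L^p}^p \le C(e^p + \mu^{1^*})$ is simply false without exploiting that coupling, and the argument you sketch does not exploit it.

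What is missing is a mechanism that ties the $L^p$-size of $g$ back to $e$ without circularity---presumably by using the Calder\'on--Zygmund structure of $\nabla T$ acting on $\Curl A$ itself (not merely on $A$), rather than just the $L^p$-boundedness of $\nabla T$. The paper relegates this to \cite{LL17}; the ``delicate bookkeeping'' you anticipate is more than bookkeeping, and your write-up should either supply the missing mechanism or flag explicitly that the argument is incomplete at this point.
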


Clearly, the constants in~\eqref{eq:weak_critical} and~\eqref{eq:strong_supercritical} are scaling invariant. With Theorem~\ref{thm:weak_critical} and Theorem~\ref{thm:strong_supercritical} at hand one can easily prove Proposition~\ref{prop:LL_P3} (cf. also~\cite[Proposition 1]{LL17}). Indeed, on the one hand, Theorem~\ref{thm:strong_supercritical} gives compactness with respect to the strong $L^2$ topology, i.e., up to a subsequence, $A_j \to A \in L^2(\Omega, \R^{n \times n})$ strongly in $L^2(B, \R^{n \times n})$. The argument for this follows in two steps (c.f. \cite{LL16}, proof of Proposition 3): First, weak $L^2$ convergence is used to pass to the limit in \eqref{eq:strong_supercritical} yielding that for every $x\in \R^n, \rho>0$ there exists $R_{\rho,x}\in SO(n)$ such that
\begin{align*}
\|A- R_{\rho,x}\|_{L^2(B_{\rho,x},\R^{n\times n})}^2
\leq C T(B_{\rho,x})^{\frac{n}{n-1}},
\end{align*}
where $T$ is the weak-$*$-limit of $|\Curl(A_j)|(\cdot\cap B_{\rho}(x))$. In a second step, a covering argument, in which the scaling invariance of the original inequality \eqref{eq:strong_supercritical} is crucial, shows that 
\begin{align*}
\Dim_{\mathcal{H}}(\{x\in \Omega: A\notin SO(n)\}) \leq n-1.
\end{align*}
On the other hand, the $BV$ structure of the limit field $A$ can be deduced using Theorem~\ref{thm:weak_critical}. Indeed, one can always approximate the field $A$ with a piecewise constant one of the form
\[
 A_{\rho}:=\sum_{i} R_i^{(\rho)} \chi_{Q_{i, \rho}},
\] 
where the cubes $Q_{i, \rho} = Q(x_i, \rho)$ of side length $\rho > 0$ (whose interiors are mutually disjoint) define a tessellation of $\mathbb{R}^n$, the sum is extended over those cubes which intersect the domain $\Omega$ and the rotations $R_i^{(\rho)}$ are given by Theorem~\ref{thm:weak_critical} applied to the balls $B(x_i, \frac{3}{2}\rho)$. It is then easy to estimate the total variation of $A_{\rho}$ in terms of $\de A$, using the fact that the weak-$L^p$ norm is comparable to the strong-$L^p$ norm for constant functions.

\subsection{Proof of Theorem \ref{prop:BV_energy}}

Relying on Proposition \ref{prop:LL_P3}, we present the proof of Theorem
\ref{prop:BV_energy}:

\begin{proof}[Proof of Proposition \ref{prop:BV_energy}]
\emph{Step 1: Truncation.} Using a truncation argument (c.f. for instance
\cite{FJM02}), we may without loss of generality assume that $ u_{m} \in
W^{1,\infty}(\Omega)$. Indeed, if this were not the case, it would always be
possible to replace the sequence $u_m$ by a sequence $v_m$ with the property that
for a constant $\bar{c}>0$ which only depends on $\Omega,n$
\begin{align*}
\|\nabla v_m\|_{L^{\infty}(\Omega)} \leq \bar{c} 100 d, \
\|\nabla u_m - \nabla v_m\|_{L^2(\Omega)}^2 \leq \bar{c} \int\limits_{\{|\nabla u_m|
\geq 100 d\}}|\nabla u_m|^2 dx \leq \bar{c}\frac{C}{m}.
\end{align*}
Here we used the energy bound \eqref{eq:energy} to infer the last estimate.
In particular, $v_m$ satisfies the energy bounds of the same type as $u_m$.
Therefore, in the sequel, we always assume that we already have that $u_{m} \in
W^{1,\infty}(\Omega)$.
\\

\emph{Step 2: Reduction to the one-well problem.}
Invoking the energy estimate \eqref{eq:energy} and the bound \eqref{eq:compare} we
obtain
\begin{align}
\label{eq:close_elast}
&\int\limits_{\Omega} \dist^2 \left(\sum\limits_{j=1}^{k} \chi_{j,m} \nabla u_{m}
U^{-1}_j ,SO(n)\right) dx \leq C c_1^{-1} m^{-1},
\end{align}
where for $j\in\{1,\dots,k\}$ the functions $\chi_{j,m}:=\chi_{\Omega_{j,m}}$ are
the characteristic functions from Lemma \ref{lem:spin_arg}.
Using the boundedness of $\nabla u_m$ and the convergence $\chi_{j,m} \rightarrow
\chi_j:= \chi_{\Omega_j}$, which was derived in Lemma \ref{lem:spin_arg}, implies
that the vector fields
\begin{align}
\label{eq:A_e}
A_{j,m}(x):=  \chi_j|_{U_j^{-1}x}  \nabla u_{m}|_{U_j^{-1}x}
U^{-1}_j,
\end{align}
satisfy
\begin{align}
\label{eq:est4}
& \int\limits_{\tilde{\Omega}_j} \dist^2(A_{j,m},SO(n)) dx \leq 2Cc_{1}^{-1} m^{-1},\\
\label{eq:est5}
& |\Curl(A_{j,m})| \leq C|D \chi_j| \mbox{
as measures, i.e.},  \\
& \notag |\Curl(A_{j,m})|(\Omega') \leq C|D \chi_j|(\Omega') \mbox{ for all } \Omega' \subset \Omega.
\end{align}
Here we set $\tilde{\Omega}_j = U_j \Omega_j$ and used that 
\begin{align*}
\Curl(\nabla u_{m}|_{U_j^{-1}x}
U^{-1}_j) = \Curl(\nabla u_{m}(U_j^{-1} \cdot ))=0.
\end{align*}
Indeed, \eqref{eq:est4} directly follows from \eqref{eq:close_elast}, while \eqref{eq:est5} follows from the computation of the distributional curl of $A_{j,m}$, a mollification argument (on the level of $u_m$) and the fact that $\nabla u_m \in L^{\infty}(\Omega)$, which is a consequence of the Lipschitz truncation from Step 1.  
\\

\emph{Step 3: Application of the one-well rigidity result and conclusion.}
Combining the estimates \eqref{eq:est4} and \eqref{eq:est5} with Proposition
\ref{prop:LL_P3}, we infer that $A_{j,m} \rightarrow A_j$ in $L^2(\Omega)$ with $A_j \in
SO(n)$ and
\begin{align}
\label{eq:grad_est}
|D A_j|(\Omega) \leq C|\Curl(A_j)|(\Omega) \leq C |D
\chi_j|(\Omega) .
\end{align}
Here we used \eqref{eq:est5} and the lower semicontinuity of the perimeter in order
to deduce the last estimate in \eqref{eq:grad_est}.
In particular, since by the energy estimate \eqref{eq:energy} there exists $u$ with
$\nabla u_{m} \rightharpoonup \nabla u$ in $L^{2}(\Omega)$, the strong $L^2$
convergence of $A_{j,m}$ yields that $\nabla u_{m} \rightarrow \nabla u$ in
$L^2(\Omega)$. 
We can therefore pass to the ($L^2(\tilde{\Omega})$-)limit in the identity
\eqref{eq:A_e} and infer that
\begin{align*}
A_{j}(x) =  \chi_j|_{U^{-1}_j x}\nabla u|_{U^{-1}_j x} U_j^{-1}.
\end{align*}
We note that due to \eqref{eq:grad_est} the associated jump sets satisfy $J_{A_j}
\subset J_{\chi_j\circ U_j^{-1}} $.
In particular this entails that $A_j \in SBV(\Omega, SO(n))$. Moreover, it implies
that on $\tilde{\Omega}_j \setminus \partial^{*} \tilde{\Omega}_j$ we have $\nabla A_j = 0$ and thus
$A_j=R_{j,i}$ for some $R_{j,i} \in SO(n)$, which is constant on each of the at most countably many BV indecomposable
components of $\tilde{\Omega}_j$ (c.f. Proposition 2.13 in \cite{DM1} or 4.2.25 in \cite{Fe}). As a consequence, there exist characteristic functions
$\tilde{\chi}_{j,i}, \chi_{j,i}$ and $R_{j,i}\in SO(n)$ with
\begin{align*}
A_j(x)=  \sum\limits_{i=1}^{\infty} \tilde{\chi}_{j,i}(x) R_{j,i} 
 =: \sum\limits_{i=1}^{\infty}   \chi_{j,i}(U_j^{-1} x) R_{j,i}.
\end{align*}
Using that this is valid for all $j\in\{1,\dots,k\}$ and rewriting it in terms of $\nabla u$ therefore results in
\begin{align*}
\nabla u (x)
=  \sum\limits_{j=1}^{k}\sum\limits_{i=1}^{\infty} \chi_{j,i}(x) R_{j,i}U_{j},
\end{align*}
where $ \sum\limits_{j=1}^{k}\sum\limits_{i=1}^{\infty} |D
\chi_{j,i}|(\Omega)\leq C  \sum\limits_{j=1}^{k}|D \chi_j|(\Omega)<\infty$. This
concludes the argument.
\end{proof}

\section{Generalizations}
\label{sec:gen}

In this section, we show that the arguments which were presented in Sections \ref{sec:setup_result}-\ref{sec:proof} generalize to a much larger class of physical systems. These include the martensitic phase transformations from Sections \ref{sec:setup_result}-\ref{sec:proof} as special cases. In order to achieve this degree of generality, we consider Hamiltonians which allow for a \emph{periodic} ground state structure, corresponding to settings in which there is an ``internal" microstructure in the different phases. In particular, the associated Hamiltonians can be much more ``nonlocal" and cover a significantly larger class of physically interesting phase transformations (c.f. Examples \ref{ex:anti_ferr} and \ref{ex:higher_dim_rot_inv} on anti-ferromagnetic transformations and on more general transformations with internal twinning structure).

\subsection{Setting}
\label{sec:Hamil}
We seek to generalize the structure result of Theorem \ref{prop:BV_energy} to a larger class of Hamiltonians. Let us be more precise about this: First,
adopting a microscopic point of view, we consider deformations $X_i: \Z^n \rightarrow \R^n$. 
Given a set $\hat{\Omega} \subset \Z^n$, we study Hamiltonians $H:\Z^n \rightarrow \R$ with a translation and rotation invariant energy density $h: \R^{n \times q} \rightarrow \R_+ $ such that
\begin{align}
\label{eq:Hamiltonian}
H(X_j) = \sum\limits_{j\in \Z^n : \ \tau_j(\Lambda) \subset \hat{\Omega}} h ( X_{j+i})_{i\in \Lambda}.
\end{align}
Here $\Lambda \subset \hat{\Omega}$ with $\max\{\# \Lambda, \diam(\Lambda)\} = q<\infty$ and $\tau_j(\Lambda):=\{i\in \Z^n: i-j \in \Lambda\}$. The notation $h (X_{j+i})_{i\in \Lambda}$ is used to abbreviate a dependence of $h$ on all the values $X_{j+r}$, $r\in \Lambda$.
Invoking the translation invariance and denoting the canonical basis of $\R^n$ by $e_1,\dots,e_n$, we observe that the Hamiltonian \eqref{eq:Hamiltonian} can be rewritten as a function of the discrete gradient,  
\begin{align*}
\nabla_d X_i:= (X_{i+e_1}-X_i,  \dots, X_{i + e_n} - X_i)^t,  \ i \in \Z^n,
\end{align*}
only (which we evaluate on a finite range of lattice points).
More precisely, by virtue of the translation invariance of the problem, the Hamiltonian takes the form
\begin{align*}
H(X_j)= \sum\limits_{j\in\Z^n: \ \tau_j(\Lambda) \subset \hat{\Omega}} \tilde{h}(\nabla_d X_{j+i})_{i\in \Lambda},
\end{align*}
where $\tilde{h}: \R^{n \times n \times q} \rightarrow \R_+$ is obtained from the energy density $h$. 
Also, using a piecewise affine interpolation on an underlying triangulation of $\Z^n$, we can always identify the discrete deformation $X_j: \Z^n  \rightarrow \R^n$ with its piecewise affine interpolation $X:\R^n \rightarrow \R^n$ (on a fixed triangulation of the lattice). On the underlying dual lattice triangles we in particular have $\nabla_d X_i = \nabla X(i)$. In the sequel, we will often switch between the discrete and continuous viewpoints without further comment.

In deriving our main structure result, we will further suppose that the following conditions hold: 

\begin{itemize}
\item[(H1)] The ground states (or rather their gradients) are periodic. More precisely, there exist deformations $Z_{l,j}: \Z^n \rightarrow \R^{n}$, $l\in \{1,\dots,k\}$, such that the following conditions hold:
\begin{itemize}
\item[(a)] The gradients $\nabla_d Z_{l,j}$ are periodic functions with a rectangular period cell $Q_{l} \subset \Z^n$ of finite diameter $\diam(Q_l)\leq L_l<\infty$. We set $L_0:= \max\limits_{l\in\{1,\dots,k\}}\{\diam(Q_l)\}$ and $Q_{0}:= [0,L_0]^n$.
\item[(b)] We have
\begin{align*}
&\sum\limits_{j\in \Z^n} h(X_{j+i})_{i\in \Lambda}= 0\\
&\Leftrightarrow  X_j = R Z_{l,j} + b \mbox{ for some } l\in\{1,\dots,k\}, \ R \in SO(n), \ b \in \R^n \\
& \quad \quad \mbox{ and all } j \in \Z^n. 
\end{align*}
\item[(c)] For each $l\in \{1,\dots,k\}$ the averaged gradients
\begin{align*}
U_l:= |Q_l|^{-1}\int\limits_{Q_l} \nabla_d Z_{l}(x)dx
\end{align*}
are invertible. In particular, the gradients of the ground states can be split into a mean deformation (which averages out the microscopic oscillations) and an oscillatory part with average zero:
\begin{align*}
\nabla_d Z_l = U_l + (\nabla_d Z_l - U_l).
\end{align*}
\item[(d)] The ground states are incompatible in the sense that  
\begin{align}
\label{eq:dist_H}
\dist(SO(n)\nabla_d Z_{l_{1},i}, SO(n)\nabla_d Z_{l_2,i})|_{i \in Q_{0}}\geq d >0,
\end{align}
where for two mappings $Y_1(i),Y_2(i)$, a set $B \subset \Z^n$ and a point $j_0 \in  \Z^n$ we set $\dist(Y_1(i), Y_2(i))|_{i\in B + j_0}:= \max\limits_{i\in B+j_0}\dist(Y_1(i), Y_2(i))$.
\end{itemize}

\item[(H2)] There exists $p\in (1,\infty)$ and a box $\tilde{\Lambda}$ which strictly contains $\Lambda$ and $2 Q_{0}:=[0,2 L_0]$, i.e. $\Lambda \subset \tilde{\Lambda}$ and $2Q_{0} \subset \tilde{\Lambda}$, such that if for all $R\in SO(n)$, for some $j_0\in \Z^n$ with $\tau_{j_0}(\tilde{\Lambda}) \subset \hat{\Omega}$ and $\kappa \in \R_+$ it holds that 
\begin{align*}
\dist(R \nabla_d Z_{l,i}, \nabla_d X_i)|_{i\in \tilde{\Lambda} + j_0} \geq \kappa,
\end{align*}
then it follows that
\begin{align*}
\sum\limits_{j\in \Z^n: \ \tau_j(\Lambda) \subset \tilde{\Lambda}+j_0} h(X_{j+i})_{i\in \Lambda} \geq c \kappa^p .
\end{align*}
Here $\dist(R \nabla_d Z_{l,i}, \nabla_d X_i)|_{i\in \tilde{\Lambda} + j_0}:= \max\limits_{i\in \tilde{\Lambda}+j_0}\dist(R \nabla_d Z_{l,i}, \nabla_d X_i)$. 
\end{itemize}

Let us comment on these assumptions: The Hamiltonian \eqref{eq:Hamiltonian} is allowed to depend on a quite large range of values, it is significantly more ``nonlocal" than the Hamiltonian which was considered in our model set-up in Section \ref{sec:setup}. Moreover, we emphasize that the Hamiltonians presented here are not only tailored to cover martensitic phase transitions, but also allow for other classes of classical phase transformations, including for example anti-ferromagnetic ones (c.f. Examples \ref{ex:anti_ferr}, \ref{ex:higher_dim_rot_inv}).
The conditions (H1)-(H2) ensure that they nevertheless mathematically display similar structural properties as the Hamiltonian from Section \ref{sec:energy}:
\begin{itemize} 
\item The first condition (H1) determines the energy wells of the Hamiltonian and implies that it can for instance be used to describe martensitic phase transformations. However, by assuming that (the gradients of) the ground states are periodic instead of being constant, we also cover a number of other interesting phase transformations, in particular it can be used to describe \emph{phase-antiphase} boundaries. A typical system, which for instance is included, is an anti-ferromagnetic Ising type model. 

The invertibility condition which is stated in (H1)(c) is a mathematical artefact of our proof. We however emphasize that we only require the \emph{averages} $U_l$ to be invertible, in particular the full ground state deformation gradient $\nabla_d Z_{l,i}$ may include strong oscillations. The invertibility condition can be viewed as having density estimates from above and below.
 
For some simple physical systems the invertibility requirement can be relaxed by modifying the ground states: For instance, for one dimensional systems such as the anti-ferromagnetic spin system from Example \ref{ex:anti_ferr}, this can be achieved by passing from the deformations $Z_{l,j}$ to deformations $\widetilde{Z}_{l,j}:= Z_{l,j} + C j$, where $C \in \R_+$ is a large positive constant. Choosing $C>0$ sufficiently large, then ensures the invertibility of the corresponding averaged gradients $U_l$. 
For more complex physical systems with rotation invariance, the invertibility condition however cannot be recovered by such a simple argument in general.

\item Property (H2) implies that the Hamiltonian is bounded from below by a power of the distance function to the wells. The requirement $p\in (1,\infty)$ yields a growth condition for the Hamiltonian in the neighbourhoods of the wells. It also replaces Lemma \ref{lem:incomp}, which ensures that we necessarily catch interfacial energy, when switching between the wells. By a similar argument, it further allows us to average on microscopic scales and hence to ignore the (possibly large) oscillations on these scales.
The restriction to $p\in(1,\infty)$ is a mathematical artefact ensuring the validity of one-well rigidity estimates. 
\end{itemize}

\begin{example}[Anti-ferromagnetic spin model]
\label{ex:anti_ferr} 
An example of a phase transition to which our set-up applies is a (one-dimensional) anti-ferromagnetic spin Hamiltonian: We begin by discussing a nearest neighbour interaction. In order to show how this fits into our framework, we use a (for this model) slightly cumbersome notation (which is chosen in order to resemble the set-up layed out in the conditions (H1)-(H2)). Let $\Omega=[0,L]$ and set
\begin{align}
\label{eq:anti_ferr1}
H(X_j):= \sum\limits_{j\in \Z \cap [0,L-1]} [(\nabla_d X_j) (\nabla_d X_{j+1})+1],
\end{align}
where $X_{i}:\Z\cap \Omega \rightarrow \{\pm 1\}$ (which in particular entails that $\nabla_d X_i \in \{0,\pm 1\}$). Up to translations, the ground states of this Hamiltonian are given by the following two piecewise affine, periodic deformations $Z_{1,j}, Z_{2,j}$, which are determined by the requirements
\begin{align*}
\nabla_d Z_{1,j} = 1, \ \nabla_d Z_{1,j+1} = -1 \mbox{ or } \nabla_d Z_{2,j} = -1, \ \nabla_d Z_{2,j+1} = 1.
\end{align*}
We remark that a transition between $Z_{1,j}, Z_{2,j}$ necessarily costs interfacial energy as it involves two parallel spin vectors, c.f. Figure \ref{fig:arrows}. In particular, if an energy bound as in \eqref{eq:en_dis} below is assumed to hold, the number of transitions between the states $Z_{1,j}$ and $Z_{2,j}$ is uniformly controlled.

\begin{figure}
\centering

\begin{subfigure}[t]{.25\textwidth}
\centering
\includegraphics[width=\linewidth]{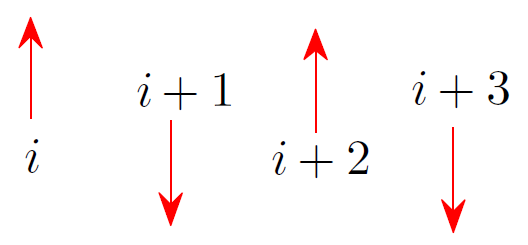}
       \caption{} \label{fig:fig_a} 
\end{subfigure} \hfill
\begin{subfigure}[t]{.25\textwidth}
\centering
\includegraphics[width= \linewidth]{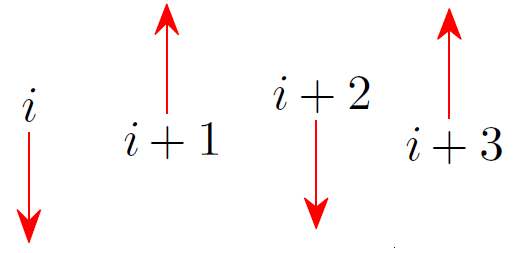}
        \caption{}\label{fig:fig_b}
\end{subfigure} \hfill
\begin{subfigure}[t]{.25\textwidth}
\centering
\includegraphics[width=\linewidth]{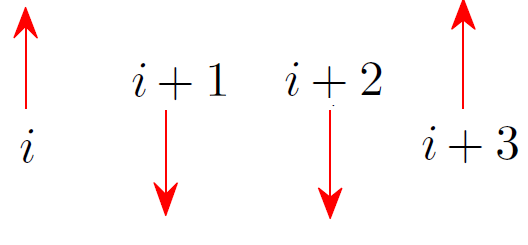}
        \caption{}\label{fig:fig_c}
\end{subfigure}

\caption{The two ground state configurations for the spin system from Example \ref{ex:anti_ferr} consist of a combination of an ``up" and a ``down" spin (illustrated over two periods in panel \eqref{fig:fig_a}) or a ``down" and an ``up" spin (illustrated over two periods in panel \eqref{fig:fig_b}). This is a typical example of a system with a phase and an antiphase displaying phase antiphase transitions.
If there is a transition between the two ground states (as for instance illustrated in panel (\ref{fig:fig_c})), this costs energy as for instance a ``down" spin is next to another ``down" spin.
}
\label{fig:arrows}
\end{figure}

While the conditions (H2) on the Hamiltonian can be checked to be satisfied, the invertibility requirement in condition (H1)(c) is violated (as a the average of (the gradient of) the sawtooth function is clearly not invertible). As the Hamiltonian however only depends on the finitely many values of the (discrete) gradient (we recall that $X_i \in \{\pm 1\}$), the problem \eqref{eq:anti_ferr1} is ``equivalent" to a setting in which the saw-tooth ground states are mapped to ground states which are invertible: More precisely, the ground states only involve the gradients $\pm 1$ (general deformations, which are not necessarily ground states, might also have $0$ as a third option for its gradient, but cannot attain more values, since the admissible deformations $X_i$ are constraint to attain only the values $\pm 1$). Hence, the anti-ferromagnetic spin model could have been mapped to a model in which the ground states attain the gradient values $1,2$. The additionally possible gradient value $0$ could be mapped to any number different from $1,2$. With this modification, the anti-ferromagnetic spin system is admissible in our framework, as now the invertibility constraint is also satisfied. In order to have an explicit setting in mind, we remark that an associated Hamiltonian could for instance be given by
\begin{align*}
\tilde{H}(\nabla X_j) = \sum\limits_{j\in \Z \cap [0,L-1]} [(\nabla_d X_j)-1][(\nabla_d X_{j+1})-2] + 1
\end{align*}
for the class of deformations, which only attain the values $\nabla_d X_j \in \{1,\frac{3}{2},2\}$.

We note that instead of considering the simple nearest neighbour anti-ferromagnetic spin Hamiltonian, we could also have considered more complicated anti-ferromagnetic Hamiltonians, which involve longer range interactions. For possible microstructures that arise in this more general framework we refer to \cite{ABC06}.
\end{example}

\begin{rmk}
\label{rmk:phases_shifts}
We observe that in the previous example the two ground states $Z_{1,j}, Z_{2,j}$ are physically not really different ``phases". As they only differ by a shift, they could be considered as being the same phase but in a different shifted form. In physical terms they correspond to antiphases, their interfaces are \emph{phase-antiphase boundaries}.
Mathematically, this could have been emphasized and formalized by working with two variables, one denoting the phase, one the shift. In order to avoid further technicalities, we have opted not to pursue this here, and have instead subsumed both the variants of the phases and the different phases in the collection $Z_{l,j}$ of the ground states (c.f. (H1)).
\end{rmk}

\begin{example}[Phases with internal microstructure]
\label{ex:higher_dim_rot_inv}
A higher dimensional example of the setting, which is covered by our class of Hamiltonians, is given by phase transformations with an ``internal microstructure". A model setting of this consists for instance of ground states which are themselves twinned, see Figure \ref{fig:boxes}. Here the different ``variants" consist of translations of the twinning structure. Moreover, rotations of the structures are also possible, these are however identified as corresponding to the same phase. A macroscopic state could combine several mesoscopic states, involving both different phases and different ``microphases". 

\begin{figure}
\includegraphics[scale=0.5]{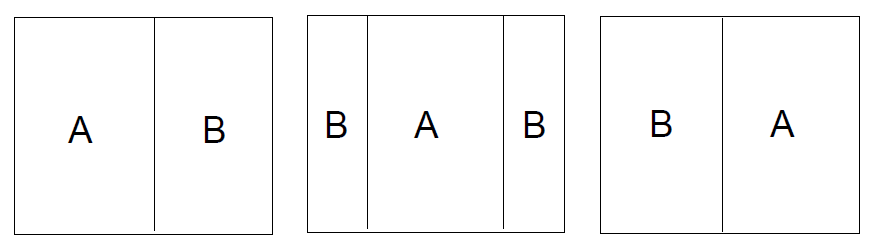}
\caption{An example of a ground state, which consists of a twin between two variants $A, B$. There are different ``realizations" of this, some examples are indicated in the three figures. Although these are physically only shifts of a single ``parent phase" and hence give rise to phase-antiphase transitions, we treat these mathematically as different phases for simplicity of notation.}
\label{fig:boxes}
\end{figure}
\end{example}

As we are interested in the limit from the microscopic to the macroscopic scales, we next introduce a lattice on a fixed domain $\Omega \subset \R^n$ and consider a rescaled version of the Hamiltonian from above. To formulate this more precisely, we use the following notation: For an arbitrary set $B \subset \R^n$ and any number $m\in \R_+$ we define 
\begin{align}
\label{eq:set_scale}
m B:=\{x\in \R^n: m^{-1}x \in B\}.
\end{align} 
With this notation at hand, we set $\hat{\Omega}:= m \Omega$ for each $m\in \N$ 
and work with
\begin{align*}
H_m(X_i):= \sum\limits_{j\in \Z^n: \ \tau_j(\Lambda) \subset \hat{\Omega}} m^{-n} \tilde{h}(\nabla_d X_{j + i})_{i \in \Lambda}.
\end{align*}
For a fixed triangulation of the underlying lattice, the piecewise affine interpolation at scale $m^{-1}$ associated with the microscopic mapping $X_i: \Z^n \rightarrow \R^n$ is then given by 
\begin{align}
\label{eq:interpol}
u_m(m^{-1}i):= m^{-1}X_{i}.
\end{align}

In addition to the conditions on the Hamiltonian, which have been explained in (H1)-(H2) from above, we will always assume a ``low energy condition", which essentially corresponds to surface energy scaling. 
To this end, we assume that any sequence of deformations $\{(X_{m})_i\}_{m\in \N}$, which will be considered in the following, satisfies the bound
\begin{align}
\label{eq:en_dis}
H_m(X_{m,i})\leq C m^{-1} 
\end{align}
for some $C>0$, which is independent of $m$. 

\subsection{The main result}
In the sequel, we seek to prove the following analogue of Theorem \ref{prop:BV_energy}, which also yields a corresponding structure result in the setting of the more general phase transformations discussed in the present section.

\begin{thm}
\label{thm:BV_struc_gen}
Let $\Omega \subset \R^n$ with $n\geq 2$ be an open, bounded Lipschitz domain.
Let $H_m$ be a Hamiltonian as described in Section \ref{sec:Hamil}, in particular assume that the conditions (H1)-(H2) hold. Further suppose that $\{(X_{m})_{i}\}_{m\in\N}$ with
\begin{align*}
(X_{m})_i: \Z^n \rightarrow \R^n \mbox{ for each } m \in \N,
\end{align*}
is a sequence of deformations satisfying \eqref{eq:en_dis} (with a uniform constant $C>0$). Denote for each $m\in \N$ by $u_m \in W^{1,\infty}_{loc}(\R^n, \R^n)$ the interpolation of $(X_m)_i$, which was defined in \eqref{eq:interpol}.
Then there exists a function $u \in BV(\Omega, \R^{n})$ and a Caccioppoli partitioning  
\begin{align*}
\Omega = \bigcup\limits_{l=1}^{k} \bigcup\limits_{j=1}^{\infty} \Omega_{l,j}, 
\end{align*}
with associated characteristic functions $\chi_{l,j}$ such that
\begin{itemize}
\item $u_m \rightharpoonup  u$ in $W^{1,p}_{loc}(\Omega,\R^{n})$ (where $p\in(1,\infty)$ denotes the exponent from (H2)) and $\nabla u \in BV_{loc}(\Omega)$, 
\item  there exist countably many $R_{l,j}\in SO(n)$ with
\begin{align*}
\nabla u (x) = \sum\limits_{j=1}^{\infty} \sum\limits_{l=1}^{k}  \chi_{l,j}(x) R_{l,j} U_l.
\end{align*}
Here $U_{l}:= |Q_l|^{-1}\int\limits_{Q_{l}} \nabla Z_l(y) dy$, with $Q_l$ being the box of periodicity in condition (H1)(a). The following properties are satisfied by the characteristic functions $\chi_{l,j}$
\begin{itemize}
\item[(i)] $
\sum\limits_{j=1}^{\infty} \sum\limits_{l=1}^{k} |D \chi_{l,j}| \leq C < \infty.$
\item[(ii)] $\chi_{l}(x)=\sum\limits_{j=1}^{\infty} \chi_{l,j}(x)$ for a.e. $x\in \Omega$.
\end{itemize}
\end{itemize}
\end{thm}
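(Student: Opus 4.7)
The plan is to carry out a mesoscopic analogue of the three-step argument used for Theorem \ref{prop:BV_energy}: (i) a ``mesoscopic spin argument'' producing a Caccioppoli partitioning of $\Omega$ at the level of averaged gradients, (ii) a reduction to an incompatible one-well problem via right multiplication by $U_l^{-1}$, and (iii) application of the rigidity result Proposition \ref{prop:LL_P3} in its $L^p$ form obtained from Theorems \ref{thm:weak_critical} and \ref{thm:strong_supercritical}. As in Step 1 of the proof of Theorem \ref{prop:BV_energy}, a preliminary Lipschitz truncation replaces $u_m$ by an $L^\infty$-bounded sequence, still satisfying \eqref{eq:en_dis}.

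For the mesoscopic spin argument, I would cover $\Omega$ by cubes $B_\alpha$ of side length $r_m\sim L_0 m^{-1}$, large enough that each $B_\alpha$ contains several full period cells of every ground state $Z_l$ yet still vanishing in the limit. Fix $\kappa_0 < d/100$ with $d$ from \eqref{eq:dist_H}, and declare $B_\alpha$ to be of \emph{type} $l \in \{1,\dots,k\}$ if there exists $R \in SO(n)$ with $\max_{i\,:\, m^{-1}i \in B_\alpha}\dist(R \nabla_d Z_{l,i}, \nabla_d (X_m)_i) \leq \kappa_0$, and \emph{bad} otherwise. The separation (H1)(d) ensures the type is unique, and (H2) produces a contribution of at least $c\kappa_0^p m^{-n}$ to $H_m$ from every bad cube, so \eqref{eq:en_dis} bounds the number of bad cubes by $Cm^{n-1}$. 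Two adjacent good cubes of distinct types $l_1\neq l_2$ would contain a translate of $\tilde{\Lambda}/m$ on which the deformation cannot simultaneously be $\kappa_0$-close to $SO(n)\nabla_d Z_{l_1}$ and to $SO(n)\nabla_d Z_{l_2}$ by (H1)(d), so every such interface must pass through a bad cube. Writing $\chi_{l,m}$ for the characteristic function of the union $\Omega_{l,m}$ of the type-$l$ cubes, the counting therefore produces the uniform perimeter bound $\abs{D\chi_{l,m}}(\Omega)\leq C$, while the bad set has Lebesgue measure $O(m^{-1})\to 0$. Caccioppoli compactness (Theorem 4.19 in \cite{AFP}) then yields, along a subsequence, $\chi_{l,m}\to \chi_l$ in $L^1(\Omega)$ with $\chi_l \in BV(\Omega)$ and $\sum_l \chi_l = 1$ a.e.

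To reduce to the one-well problem I consider the mesoscopic average $\bar{A}_m(x):=\abs{Q_{r_m}(x)}^{-1}\int_{Q_{r_m}(x)} \nabla u_m\,dy$. Averaging over a full period cell replaces the oscillating $\nabla_d Z_l$ by its mean $U_l$, so on type-$l$ cubes $\bar{A}_m$ is within $O(\kappa_0)$ in $L^\infty$ of $R U_l$ for some $R\in SO(n)$. Following Step 2 of the proof of Theorem \ref{prop:BV_energy}, I then set
\begin{align*}
A_{l,m}(x) := \chi_{l,m}\rB{U_l^{-1}x}\,\bar{A}_m\rB{U_l^{-1}x}\,U_l^{-1}
\end{align*}
on $\tilde{\Omega}_l := U_l\Omega_l$. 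By construction $\dist(A_{l,m}, SO(n))\to 0$ in $L^p$, while its distributional $\Curl$ decomposes into a bulk part coming from the microscopic oscillations $\nabla u_m - \bar{A}_m$, which vanishes in the sense of bounded measures on mesoscopic scales by (H1)(a)-(c) and (H2), and a jump part supported on $\partial\Omega_{l,m}$ dominated by $\abs{D\chi_{l,m}}$. This gives $\abs{\Curl(A_{l,m})}(\Omega') \leq C\abs{D\chi_l}(\Omega') + o(1)$ for every $\Omega' \Subset \tilde{\Omega}_l$. Proposition \ref{prop:LL_P3}, in the $L^p$ version derived from Theorems \ref{thm:weak_critical} and \ref{thm:strong_supercritical}, then yields strong $L^p$ convergence $A_{l,m}\to A_l\in BV(\tilde{\Omega}_l, SO(n))$ with $\abs{DA_l}(\tilde{\Omega}_l) \leq C \abs{D\chi_l}(\tilde{\Omega}_l)$. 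The decomposition of $SO(n)$-valued $BV$ maps into constants on their $BV$-indecomposable components (Proposition 2.13 in \cite{DM1}) supplies rotations $R_{l,j}\in SO(n)$ and characteristic functions $\tilde{\chi}_{l,j} = \chi_{l,j}\circ U_l^{-1}$ with $A_l = \sum_j R_{l,j} \tilde{\chi}_{l,j}$. Pulling back by $U_l$ and summing over $l$ gives the claimed representation $\nabla u = \sum_{l,j} R_{l,j} U_l \chi_{l,j}$ with $\sum_{l,j}\abs{D\chi_{l,j}}(\Omega) \leq C\sum_l \abs{D\chi_l}(\Omega) <\infty$.

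The main obstacle is the splitting of $\Curl(A_{l,m})$ into a bulk piece that asymptotically vanishes and a jump piece controlled by $\abs{D\chi_{l,m}}$. Unlike in Section \ref{sec:proof}, where $\nabla u_m$ was piecewise constant with a classical limit, here the ground-state gradients themselves oscillate at order $d$ on the scale of $Q_l$, and one must genuinely exploit the interplay of the periodicity (H1)(a)-(c) with the quantitative energy lower bound (H2) to show that the microscopic fluctuation $\nabla u_m - \bar{A}_m$ converges weakly to zero in $L^p_{loc}$. Once this oscillation control is in place, the remainder of the argument is a routine adaptation of the scheme of Section \ref{sec:proof}.
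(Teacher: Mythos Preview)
Your outline follows the paper's own strategy quite closely: mesoscopic spin argument (this is Proposition~\ref{lem:spin_arg_gen}), averaging to kill the ground-state oscillations, right multiplication by $U_l^{-1}$, and application of Proposition~\ref{prop:LL_P3}. The overall architecture is correct.

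However, the step you flag as the ``main obstacle'' is a phantom. If $\bar{A}_m(x)=\abs{Q_{r_m}}^{-1}\int_{Q_{r_m}(x)}\nabla u_m\,dy$ is the moving average (i.e.\ the convolution $\nabla u_m\ast \abs{Q_{r_m}}^{-1}\chi_{Q_{r_m}}$), then it is itself a gradient, namely $\nabla(u_m\ast \abs{Q_{r_m}}^{-1}\chi_{Q_{r_m}})$, and consequently $\bar{A}_m(U_l^{-1}\cdot)U_l^{-1}$ is curl-free. The distributional curl of $A_{l,m}$ therefore has \emph{no} bulk contribution from $\nabla u_m-\bar{A}_m$; it is supported entirely on the jump set of the characteristic function, exactly as in Step~2 of the proof of Theorem~\ref{prop:BV_energy}. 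The paper makes this explicit by writing $\nabla\overline{u}_m=\nabla u_m\ast\chi_{Q_l,m}$ and setting $\varphi_m(y)=\overline{u}_m(U_l^{-1}y)$, so that $A_{l,m,\epsilon}=\chi_{\widetilde\Omega_{l,\epsilon}}\nabla\varphi_m$ and \eqref{eq:curl_g} is immediate. (If instead you meant a cell-wise, piecewise-constant average on the fixed cubes $B_\alpha$, then $\bar{A}_m$ is not a gradient and you do acquire curl on every internal cube face, which is a genuine nuisance; the moving average avoids this entirely.)

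Two further points where the paper is sharper than your sketch. First, the paper averages over the \emph{exact} period cell $Q_l$ of the phase in question, not over a generic mesoscopic cube; this is what guarantees that the average of $\nabla_d Z_l$ is \emph{exactly} $U_l$ rather than only approximately, and hence that $\dist(\nabla\overline{u}_m,SO(n)U_l)$ is controlled by the Hamiltonian via~(H2) (see \eqref{eq:local_bd}--\eqref{eq:energy_a}). Second, the genuine extra step relative to Theorem~\ref{prop:BV_energy} is not curl control but the identification of the limit: one must show that the weak $L^p$ limits of $\nabla u_m$ and of $\nabla\overline{u}_m$ coincide. The paper dispatches this in one line by observing that $\psi\ast\chi_{Q_l,m}\to\psi$ strongly as $m\to\infty$, so testing $\nabla\overline{u}_m$ against $\psi$ is the same, in the limit, as testing $\nabla u_m$. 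You mention this convergence at the end, but you have attached it to the wrong difficulty.
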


We again emphasize that from a physical point of view the different ground states $Z_l$ do not necessarily denote different phases, but can in principle also represent different shifts of a common ``parent phase" (c.f. the explanations in Remark \ref{rmk:phases_shifts} after Example \ref{ex:anti_ferr}). In this case the corresponding boundaries of the domains $\Omega_l$, $l\in\{1,\dots,k\}$, should be interpreted as phase-antiphase boundaries.

\subsection{Proofs}
In order to prove Theorem \ref{thm:BV_struc_gen}, we argue similarly as in Sections \ref{sec:spin} and \ref{sec:proof}.

Indeed, we first observe that (H1)-(H2) suffice to prove an analogue of the spin argument from Proposition \ref{lem:spin_arg} from Section \ref{sec:spin}:

\begin{prop}
\label{lem:spin_arg_gen}
Let $n\geq 1$ and suppose that (H1)-(H2) and \eqref{eq:en_dis} are valid. Assume that for any $m\in \N$ we have a sequence of deformation $\{(X_{m})_i\}_{i\in \Z^n}$ satisfying
\eqref{eq:en_dis} with a constant $C>0$ which is independent of $m$. Let further for $l\in\{1,\dots,k\}$
\begin{align*}
\Omega_{l,m}&:=\left\{j \in m^{-1}\Z^n \cap \Omega: \ \tau_{mj}(Q_{0}) \subset \hat{\Omega} \mbox{ and there is } R \in SO(n) \mbox{ s.t. } \right. \\
& \quad \quad \left. \dist(\nabla_d X_i,R \nabla_d Z_{l,i})_{i \in Q_{0} + mj} \leq \frac{d}{100} \right\},\\
\Omega_{b,m}&:=\left\{j \in m^{-1}\Z^n \cap \Omega:  \ \tau_{mj}(Q_{0}) \subset \hat{\Omega} \mbox{ and for all } l \in\{1,\dots,k\} \mbox{ and } \right.\\
&\left. \qquad \mbox{ for all } R \in SO(n) \mbox{ we have }  \dist(\nabla_d X_i,R \nabla_d Z_{l,i})_{i\in Q_{0} + mj} > \frac{d}{100} \right\},\\
\Omega_{bd,m}&:=\left\{j \in m^{-1}\Z^n \cap \Omega:  \ \tau_{mj}(Q_{0}) \cap (\Z^n\setminus \hat{\Omega}) \neq \emptyset \right\}.
\end{align*}
Then there exist Caccioppoli sets $\Omega_1,\dots,\Omega_k \subset \Omega$ such that
\begin{align*}
\chi_{\Omega_{l,m}} \rightarrow \chi_{\Omega_l}, \ \chi_{\Omega_{b,m}} \rightarrow
0, \ \chi_{\Omega_{bd,m}} \rightarrow
0 \mbox{ in } L^1(\Omega), \quad \Omega = \bigcup\limits_{j=1}^{k}\Omega_j.
\end{align*}
Here $\chi_{\Omega_{l,m}}, \chi_{\Omega_{b,m}}, \chi_{\Omega_l}, \chi_{\Omega_{bd,m}}:\Omega \rightarrow
\{0,1\}$ denote the characteristic functions associated with the corresponding sets.
\end{prop}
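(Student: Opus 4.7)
The plan is to mirror the proof of Proposition \ref{lem:spin_arg} from the nearest-neighbour setting, replacing the explicit rank-one spin lemma (Lemma \ref{lem:incomp}) by an energetic spin statement derived directly from hypothesis (H2), and then running the same counting/compactness argument. Throughout, I will interpret $\chi_{\Omega_{l,m}}$, $\chi_{\Omega_{b,m}}$, $\chi_{\Omega_{bd,m}}$ as the indicator functions of the unions of rescaled lattice cells $m^{-1}(j+[0,1]^n)$ over the lattice points $j \in m^{-1}\Z^n \cap \Omega$ belonging to the respective discrete sets.

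First, I would prove the analogue of the spin lemma: there exists $\kappa_0 = \kappa_0(d)>0$ (say $\kappa_0 = d/3$) such that whenever $j_1, j_2 \in m^{-1}\Z^n \cap \Omega$ label blocks $Q_{0}+mj_1$ and $Q_{0}+mj_2$ that both fit inside a single translate $\tilde\Lambda + j_0 \subset \hat\Omega$, and $j_1 \in \Omega_{l_1,m}$, $j_2 \in \Omega_{l_2,m}$ with $l_1 \neq l_2$, then for every $R\in SO(n)$ and every $l\in\{1,\dots,k\}$ one has $\dist(R\nabla_d Z_{l,i}, \nabla_d X_i)|_{i\in \tilde\Lambda+j_0} \geq \kappa_0$. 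The reason is straightforward: if $l \neq l_1$, then by the well-separation \eqref{eq:dist_H} in (H1)(d) applied on the shifted box $Q_0 + mj_1$, combined with the triangle inequality and the definition $d/100$ of closeness on that block, the mismatch on $Q_{0}+mj_1$ exceeds $d - d/100$; symmetrically if $l \neq l_2$; and $l=l_1=l_2$ is ruled out by hypothesis. Invoking (H2) then yields a lower bound $\sum_{j:\tau_j(\Lambda)\subset \tilde\Lambda+j_0} h(X_{j+i})_{i\in\Lambda} \geq c\kappa_0^p$ for every such neighbouring pair that disagrees.

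Second, I would run two counting estimates in parallel, both based on the energy bound \eqref{eq:en_dis} rewritten (because of the $m^{-n}$ prefactor in $H_m$) as $\sum_j h(X_{j+i})_{i\in\Lambda} \leq C m^{n-1}$. On the one hand, applying (H2) with $\kappa = d/100$ directly to each $j\in\Omega_{b,m}$ (these are exactly the lattice points where no rotated ground state is close on $Q_0+mj$), one gets that each bad block carries at least $c(d/100)^p$ of raw energy, so $\#\Omega_{b,m} \leq C m^{n-1}$; after rescaling, the Lebesgue measure of the union of the associated cells is $O(m^{-1}) \to 0$. On the other hand, applying the spin statement above to every adjacent pair of blocks with different labels $l_1 \neq l_2$, the number of such ``transition'' pairs is bounded by $Cm^{n-1}/\kappa_0^p$. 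Since each such pair contributes an interface of $(n{-}1)$-volume of order $m^{-(n-1)}$, this gives a uniform (in $m$) perimeter bound
\begin{align*}
\Per(\Omega_{l,m}) \leq C \big( \#\{\text{bad transitions involving } \Omega_{l,m}\} + \#\Omega_{b,m} + \#\Omega_{bd,m}\big) m^{-(n-1)} \leq C'.
\end{align*}
For the boundary layer $\Omega_{bd,m}$, note that the constraint $\tau_{mj}(Q_0) \cap (\Z^n\setminus\hat\Omega) \neq \emptyset$ places $j$ at rescaled distance $\leq L_0/m$ of $\partial\Omega$; since $\Omega$ is Lipschitz, the corresponding set has measure $O(m^{-1})$ and $(n{-}1)$-perimeter $O(1)$, so it gives both the $L^1$-convergence to zero and a harmless contribution to the perimeter count.

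Third, with uniform $BV$ bounds on each $\chi_{\Omega_{l,m}}$, standard $BV$ compactness produces (along a subsequence) limits $\chi_{\Omega_l} \in BV(\Omega,\{0,1\})$ with $\chi_{\Omega_{l,m}} \to \chi_{\Omega_l}$ in $L^1(\Omega)$, and the argument already used in the proof of Proposition \ref{lem:spin_arg} (appealing to Theorem 4.19 in \cite{AFP}) shows that $\Omega_1,\dots,\Omega_k$ form a Caccioppoli partition of $\Omega$ up to null sets, the remainders $\chi_{\Omega_{b,m}}$ and $\chi_{\Omega_{bd,m}}$ having been shown to vanish in $L^1$. The main obstacle I anticipate is the careful bookkeeping in the spin lemma: one must verify that the two adjacent $Q_0$-blocks can genuinely be enclosed in a common translate of $\tilde\Lambda$ (which is guaranteed by the assumption $2Q_0 \subset \tilde\Lambda$ in (H2)), and one must separate the ``internal'' bad blocks $\Omega_{b,m}$, the transition interfaces between different $\Omega_{l,m}$, and the boundary layer $\Omega_{bd,m}$ so that all three sources of perimeter are estimated by the same surface-energy budget $Cm^{n-1}$ without double-counting.
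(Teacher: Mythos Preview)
Your proposal is correct and follows essentially the same approach as the paper: a spin lemma from (H1)(d) via the triangle inequality, a counting argument from (H2) and the energy bound \eqref{eq:en_dis} for both $\Omega_{b,m}$ and the transition interfaces, a geometric bound on the boundary layer $\Omega_{bd,m}$, and then BV compactness as in Proposition~\ref{lem:spin_arg}. The only cosmetic difference is that the paper phrases its spin lemma as ``if $j_0$ is in phase $l$ but its neighbour $j_0+m^{-1}e_{i_0}$ is \emph{not} in phase $l$, then the energy on $\tilde{\Lambda}+mj_0$ is large,'' which absorbs the phase-to-bad transitions into the same statement, whereas you handle those separately via the count $\#\Omega_{b,m}\leq Cm^{n-1}$; both organisations yield the same uniform perimeter bound.
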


\begin{rmk}
The sets $\Omega_{b,m}$ consists of all points of comparatively large local energy density.
The set $\Omega_{bd,m}$ corresponds to a boundary layer, which is present due to the finite range interactions in our Hamiltonian and the boundedness of the set $\hat{\Omega}$.
\end{rmk}

\begin{proof}
The argument for Proposition \ref{lem:spin_arg_gen} follows from the assumptions (H1), (H2) and the energy bound \eqref{eq:en_dis} similarly as in the proof of Proposition \ref{lem:spin_arg}. 

First, we note that the energy bound \eqref{eq:en_dis} in combination with (H2) ensures that at most $\tilde{C}m^{d-1}$ lattice points have a local energy larger than a fixed constant. This can be seen by a counting argument as in Lemma \ref{lem:count}. In particular, this directly implies the vanishing of $\Omega_{b,m}$ in the limit $m\rightarrow \infty$. 

Next, we observe that (H1) combined with (H2) controls the length of the interfaces between the different phases $Z_{l,i}$, $l\in\{1,\dots,k\}$, and hence replaces Lemma \ref{lem:incomp}. More precisely, (H1) and (H2) yield that for $c_0= d/100$ the following property is satisfied:
If for some $j_0\in m^{-1}\Z^n \cap \Omega$ and some $i_0 \in \{1,\dots,n\}$ it holds that for some $R \in SO(n)$, for all $\tilde{R}\in SO(n)$ and some $l\in \{1,\dots,k\}$
\begin{align*}
\dist(R \nabla_d Z_{l,i}, \nabla_d X_i)_{i\in Q_{0} + m j_0} \leq c_0/100,\\
\dist(\tilde{R} \nabla_d Z_{l,i},\nabla_d X_i)_{i\in Q_{0} + m j_0 + e_{i_0}} > c_0/100,
\end{align*}
then for any $Q \in SO(n)$ and any $r \in \{1,\dots,k\}$
\begin{align*}
\dist(Q \nabla_d Z_{r,i}, \nabla_d X_i)_{i\in \tilde{\Lambda} + m j_0} > c_0/100.
\end{align*}
Here $\tilde{\Lambda}$ denotes the box from the condition (H2) and $e_{i_0}$ is a canonical unit vector in $\Z^n$.
This follows from an application of the triangle inequality in conjunction with the control \eqref{eq:dist_H}: As by assumption $\dist(\tilde{R} \nabla_d Z_{l,i}, \nabla_d X_i)_{i\in Q_{0} + m j_0 + e_{i_0}} > c_0/100$ for all $\tilde{R} \in SO(n)$, it suffices to show that for all $Q \in SO(n)$ and all $r\in \{1,\dots,k\}\setminus\{l\}$
\begin{align*}
\dist(Q \nabla_d Z_{r,i}, \nabla_d X_i)_{i\in \tilde{\Lambda}+ m j_0}>c_0/100.
\end{align*}
But this is a consequence of the following lower bound, which uses \eqref{eq:dist_H}: For all $Q,R \in SO(n)$
\begin{align*}
\dist(Q\nabla_d Z_{r,i}, \nabla_d X_i)_{i \in \tilde{\Lambda}+ m j_0}
&\geq \dist(Q \nabla_d Z_{r,i}, R \nabla_d Z_{l,i})_{i\in Q_{0}+ m j_0}\\
& \quad \quad  - \dist(R\nabla_d Z_{l,i}, \nabla_d X_i)_{i\in Q_{0} + m j_0}\\
& \geq d - \frac{c_0}{100}> \frac{c_0}{100}.
\end{align*}

Finally, as an additional point in the present more general set-up, we note that the size of the boundary layer $\Omega_{bd,m}$ is controlled by
\begin{align*}
|\Omega_{bd,m}| \leq C q m^{-1}, \ \Per(\Omega_{bd,m}) \leq C.
\end{align*}
This follows from the finite interaction range which is determined by the choice of $\Lambda \in \Z^n$.

As in the proof of Proposition \ref{lem:spin_arg} these three observations imply the uniform perimeter bounds on $\chi_{\Omega_{l,m}}$. These then lead to the remaining claims by general compactness arguments in the space BV (see the proof of Proposition \ref{lem:incomp} for the details).
\end{proof}

With the spin result at hand, we can proceed to the proof of Theorem \ref{thm:BV_struc_gen}. This follows along the same lines as in Section \ref{sec:proof} and mainly relies on a reduction to a one-well problem. As an additional technical aspect with respect to the proof of Theorem \ref{prop:BV_energy}, we have to deal with the fact that the ground states are now periodic in general and that only the \emph{average} deformation gradients $U_l$ are invertible. Hence, we introduce an additional auxiliary averaging step. 

\begin{proof}[Proof of Theorem \ref{thm:BV_struc_gen}]
Recalling that $u_m$ denotes the piecewise affine interpolation of $(X_{m})_i$ (c.f. \eqref{eq:interpol}), we first observe that by a truncation argument, we may again assume that $\nabla u_m$ is bounded. 
In order to avoid boundary effects, we define for each small parameter $\epsilon >0$ the set $\Omega_{\epsilon}:=\{x\in \Omega: \ \dist(x,\partial \Omega)>\epsilon\}$ and the associated Caccioppoli sets $\Omega_{l,\epsilon}:=\Omega_{\epsilon}\cap \Omega_{l}$. Here $\Omega_l$ with $l\in \{1,\dots,k\}$ are the sets from Proposition \ref{lem:spin_arg_gen}.

We note that the assumption (H2) yields that for any deformation $Y_i: \Z^n \rightarrow \R^n$
\begin{align*}
\sum\limits_{j\in \Z^n, \ \tau_j(\Lambda) \subset \tilde{\Lambda} + j_0} h(Y_{j+i})_{i\in \Lambda}
 \geq c \min\limits_{p\in \{1,\dots,k\}}\dist^p(R_{j_0} \nabla_d Z_{p,i}, \nabla_d Y_{i})_{i \in \tilde{\Lambda} +j_0},
\end{align*}
where $R_{j_0} \in SO(n)$ is the rotation which satisfies
\begin{align*}
R_{j_0}:= \argmin\limits_{R \in SO(n)} \min\limits_{p\in \{1,\dots,k\}} \dist(R \nabla_d Z_{p,i}, \nabla_d Y_{i})_{i\in \tilde{\Lambda}+j_0}.
\end{align*}
For convenience of notation and without loss of generality we assume that 
\begin{align*}
\min\limits_{p\in \{1,\dots,k\}}\dist^p(R_{j_0} \nabla_d Z_{p,i}, \nabla_d Y_{i})_{i \in \tilde{\Lambda} +j_0}
= \dist^p(R_{j_0} \nabla_d Z_{l,i}, \nabla_d Y_{i})_{i \in \tilde{\Lambda} +j_0}.
\end{align*}
Since, 
\begin{align*}
\dist(R_{j_0} \nabla_d Z_{l,i}, \nabla_d Y_i)_{i \in \tilde{\Lambda}+j_0} \geq \dist(R_{j_0} U_{l}, \nabla_d \overline{Y}_{j_0}),
\end{align*}
(where we used that $Q_{l} \subset \tilde{\Lambda}$ for all $l\in\{1,\dots,k\}$),
we then also infer
\begin{align}
\label{eq:local_bd}
\sum\limits_{j\in \Z^n, \ \tau_j(\Lambda) \subset \tilde{\Lambda} + j_0} h(Y_{j+i})_{i\in \Lambda}
 \geq c \dist^p(R_{j_0}U_l, \nabla_d \overline{Y}_{j_0}) \chi_{m\Omega_{l,\epsilon}}(j_0).
\end{align}
Here $U_l: \Z^n \rightarrow \R^n$ is the average deformation which was defined in the formulation of the theorem and $\nabla_d \overline{Y}_j: \Z^n \rightarrow \R^n$ denotes a similarly averaged field:
\begin{align*}
\nabla_d \overline{Y}_j = |Q_{l}|^{-1} \int\limits_{Q_{l}+j} \nabla_d Y(x) dx
\end{align*}
(we remark that the size of the averaging domain depends on the value of $l\in\{1,\dots,k\}$).

We apply this observation to $Y_i = (X_m)_{i}$, invoke the energy bound \eqref{eq:en_dis}, sum over all values $i\in \Z^n \cap \hat{\Omega}_{l,\epsilon}$ and use the boundedness of $\nabla_d X$ together with the bounds for $|\Omega_{bd,m}|, |\Omega_{b,m}|$. 
If, for convenience, we denote the (rescaled) piecewise affine interpolation of $\overline{X}_i$ by $\overline{u}_m$ (here the interpolation is considered in the sense of \eqref{eq:interpol} and $\overline{X}_i$ is a suitable antiderivative of $\nabla \overline{X}_i$), this leads to
\begin{align}
\label{eq:energy_a}
\int\limits_{\Omega_{l,\epsilon}} \dist^p(\nabla \overline{u}_m, SO(n)U_l) dx \leq C  L_0^n m^{-1},
\end{align} 
where $L_0$ denotes the constant from condition (H1)(a).
With \eqref{eq:energy_a} at hand, we can now argue as in the proof of Theorem \ref{prop:BV_energy}:
As in the argument for Theorem \ref{prop:BV_energy}, we first note that
by setting $x= U_l^{-1}y$ (where we use the invertibility condition from the requirement (H1)(c)), the function $\nabla \overline{u}_m(x)U_l^{-1}$ can be rewritten as
\begin{align*}
\nabla \overline{u}_m(U_l^{-1}y) U_l^{-1} = \nabla \varphi_m(y),
\end{align*}
with $\varphi_m(y)= \overline{u}_m(U_l^{-1}y)$ being piecewise affine. 
Hence,
by the change of coordinates formula and by recalling \eqref{eq:energy_a},
we deduce for each $l\in \{1,\dots,k\}$
\begin{align*}
\int\limits_{\widetilde{\Omega}} \dist^p(\chi_{\widetilde{\Omega}_{l,\epsilon}} \nabla \varphi_{m}, SO(n))dx
= \int\limits_{\widetilde{\Omega}_{l,\epsilon}} \dist^p(\nabla \varphi_{m}, SO(n))dx \leq C L_0^n m^{-1}, 
\end{align*}
where $\widetilde{\Omega}:= U_l(\Omega)$ and $\widetilde{\Omega}_{l,\epsilon}:= U_{l}(\Omega_{l,\epsilon})$.

Setting 
\begin{align*}
A_{l,m,\epsilon}(y):= \chi_{\widetilde{\Omega}_{l,\epsilon}}(y) \nabla \varphi_m(y),
\end{align*}
we note that for some constant $C>0$ which is independent of $m$,
\begin{align}
\label{eq:curl_g}
|\Curl(A_{l,m,\epsilon})| \leq C|D \chi_{\widetilde{\Omega}_{l,\epsilon}}|  \mbox{ as measures}.
\end{align}
As a consequence, Proposition \ref{prop:LL_P3} is applicable and yields
$A_{l,m,\epsilon} \rightarrow A_{l,\epsilon}$ in $L^p(\Omega)$ for $m\rightarrow \infty$ with $A_{l,\epsilon} \in SO(n)$ and
\begin{align}
\label{eq:prop41}
|D A_{l,\epsilon}|(\Omega) \leq  |\Curl(A_{l,\epsilon})|(\Omega) \leq C|D \chi_{\widetilde{\Omega}_{l,\epsilon}}|(\Omega). 
\end{align}

This entails that
there exist countably many characteristic functions
$\tilde{\chi}_{l,i}$ and matrices $R_{l,i}\in SO(n)$ with
\begin{align}
\label{eq:conv_1}
\begin{split}
&A_{l,m,\epsilon}(y) \rightarrow A_{l,\epsilon}(y)=  \sum\limits_{i=1}^{\infty} \tilde{\chi}_{l,i}(y) R_{l,i} \mbox{ in } L^p(\widetilde{\Omega}_{l,\epsilon})\\
& \mbox{and }
\sum\limits_{i=1}^{\infty} |D \widetilde{\chi}_{l,i}| \leq C |D \chi_{\widetilde{\Omega}_{l,\epsilon}}|.
\end{split}
\end{align}
In particular by definition of $A_{l,m,\epsilon}$
\begin{align*}
\nabla \varphi_m(y) \rightarrow \sum\limits_{i=1}^{\infty} \tilde{\chi}_{l,i}(y) R_{l,i} \mbox{ in } L^p(\widetilde{\Omega}_{l,\epsilon}).
\end{align*}
Changing coordinates again then leads to 
\begin{align}
\label{eq:conv}
\nabla \overline{u}_m - \sum\limits_{i=1}^{\infty} \tilde{\chi}_{l,i}(U_{l}x) R_{l,i} U_{l}
 \rightarrow 0 \mbox{ in } L^p(\Omega_{l,\epsilon}).
\end{align}
We combine \eqref{eq:conv} with the weak convergence of $\nabla u_m$:
On the one hand, the boundedness of the energy \eqref{eq:en_dis} yields 
\begin{align}
\label{eq:weak_limit1}
\nabla u_m \rightharpoonup \nabla u \mbox{ in } L^p(\Omega_{l,\epsilon}).
\end{align}
On the other hand, the periodicity of $\nabla Z_{l}$ implies that the weak limits of $\nabla u_m$ and $\nabla \overline{u}_m$ agree. Indeed, since in the phase $l$ the function $\nabla \overline{u}_m$ is defined by averaging $\nabla u_m$ over the (shifted) period cell $Q_l$, we have $\nabla \overline{u}_m = \nabla u_m \ast \chi_{Q_l,m}$, where $\chi_{Q_l,m}$ is the characteristic function of $m^{-1}Q_l \subset \Omega$. Thus, for all $\psi \in C_c^{\infty}(\Omega)$
\begin{align*}
\lim\limits_{m\rightarrow \infty} \int\limits_{\R^n} \nabla \overline{u}_m(x) \psi(x) dx
= \lim\limits_{m \rightarrow \infty} \int\limits_{\R^n} \nabla u_m(x) (\psi \ast \chi_{Q_l,m})(x) dx
=  \int\limits_{\R^n} \nabla u(x) \psi(x) dx.
\end{align*}
Here we used that $\nabla u_m \rightharpoonup \nabla u$ and $\psi \ast \chi_{Q_l,m} \rightarrow \psi$ in $L^2(\Omega)$. 
Hence, by \eqref{eq:conv} and defining $\chi_{l,i}(x) := \tilde{\chi}_{l,i}U_{l})$
\begin{align}
\label{eq:ident}
\nabla u (x) = \sum\limits_{i=1}^{\infty} \tilde{\chi}_{l,i}(x) R_{l,i} U_{l} \mbox{ for } x \in \Omega_{l,\epsilon}.
\end{align}
Using that an identity of this type holds for any $\epsilon>0$, then shows (by considering a countable family of $\epsilon \rightarrow 0$) that an identity of the form \eqref{eq:ident} holds for any subdomain of $\Omega_l$. Moreover, considering suitable diagonal sequences implies that the corresponding limits have to agree on the intersection of the various sets $\Omega_{l,\epsilon}$, in particular (up to boundary effects) neither the characteristic functions $\tilde{\chi}_{l,i}$ nor the rotations $R_{l,i}$ depend on $\epsilon>0$. Since $\nabla u$  is defined in any compact set of $\Omega$, this provides the desired representation result. The control of the BV norms of the characteristic functions $\widetilde{\chi}_{l,i}$ follows from this and from the estimate in \eqref{eq:conv_1}.
\end{proof}

\bibliographystyle{alpha}
\bibliography{citations}

\begin{thebibliography}{CDM14}

\bibitem[ABC06]{ABC06}
Roberto Alicandro, Andrea Braides, and Marco Cicalese.
\newblock Phase and anti-phase boundaries in binary discrete systems: a
  variational viewpoint.
\newblock {\em Networks and Heterogeneous Media}, 1(1):85, 2006.

\bibitem[AFP00]{AFP}
Luigi Ambrosio, Nicola Fusco, and Diego Pallara.
\newblock {\em Functions of bounded variation and free discontinuity problems},
  volume 254.
\newblock Clarendon Press Oxford, 2000.

\bibitem[ALP17]{ALP17}
Roberto Alicandro, Giuliano Lazzaroni, and Mariapia Palombaro.
\newblock Derivation of a rod theory from lattice systems with interactions
  beyond nearest neighbours.
\newblock 2017.

\bibitem[CDM14]{CDM14}
Sergio Conti, Georg Dolzmann, and Stefan M{\"u}ller.
\newblock Korn’s second inequality and geometric rigidity with mixed growth
  conditions.
\newblock {\em Calculus of Variations and Partial Differential Equations},
  50(1-2):437--454, 2014.

\bibitem[CGP07]{CGP07}
Antonin Chambolle, Alessandro Giacomini, and Marcello Ponsiglione.
\newblock Piecewise rigidity.
\newblock {\em Journal of Functional Analysis}, 244(1):134--153, 2007.

\bibitem[DM95]{DM1}
Georg Dolzmann and Stefan M{\"u}ller.
\newblock Microstructures with finite surface energy: the two-well problem.
\newblock {\em Archive for Rational Mechanics and Analysis}, 132:101--141,
  1995.

\bibitem[Fed14]{Fe}
Herbert Federer.
\newblock {\em Geometric measure theory}.
\newblock Springer, 2014.

\bibitem[FJM02]{FJM02}
Gero Friesecke, Richard~D James, and Stefan M{\"u}ller.
\newblock A theorem on geometric rigidity and the derivation of nonlinear plate
  theory from three-dimensional elasticity.
\newblock {\em Communications on Pure and Applied Mathematics},
  55(11):1461--1506, 2002.

\bibitem[KLR15]{KLR14}
Georgy Kitavtsev, Stephan Luckhaus, and Angkana R{\"u}land.
\newblock Surface energies arising in microscopic modeling of martensitic
  transformations.
\newblock {\em Mathematical Models and Methods in Applied Sciences},
  25(04):647--683, 2015.

\bibitem[KLR17]{KLR15}
Georgy Kitavtsev, Stephan Luckhaus, and Angkana R{\"u}land.
\newblock Surface energies emerging in a microscopic, two-dimensional two-well
  problem.
\newblock {\em Proceedings of the Royal Society of Edinburgh Section A:
  Mathematics}, 147(5):1041--1089, 2017.

\bibitem[LL16]{LL16}
Gianluca Lauteri and Stephan Luckhaus.
\newblock An energy estimate for dislocation configurations and the emergence
  of cosserat-type structures in metal plasticity.
\newblock {\em arXiv preprint arXiv:1608.06155}, 2016.

\bibitem[LL17]{LL17}
Gianluca Lauteri and Stephan Luckhaus.
\newblock Geometric rigidity estimates for incompatible fields in dimension
  $\geq $3.
\newblock {\em arXiv preprint arXiv:1703.03288}, 2017.

\bibitem[Lor09]{L09}
Andrew Lorent.
\newblock The regularisation of the n-well problem by finite elements and by
  singular perturbation are scaling equivalent in two dimensions.
\newblock {\em ESAIM: Control, Optimisation and Calculus of Variations},
  15(2):322--366, 2009.

\end{thebibliography}

\end{document}